\title{NUMERICAL SOLUTION OF THE TIME-FRACTIONAL FOKKER--PLANCK 
EQUATION WITH GENERAL FORCING\thanks{This work was supported by the 
Australian Research 
Council grant DP140101193.}}
\author{Kim Ngan Le\footnotemark[2]
\and William McLean\footnotemark[2] 
\and Kassem Mustapha\footnotemark[3]}
\newcommand{\Sh}{\mathbb{S}_h}
\newcommand{\R}{\mathbb{R}}
\newcommand{\iprod}[1]{\langle#1\rangle}
\newcommand{\bigiprod}[1]{\bigl\langle#1\bigr\rangle}
\newcommand{\gtilde}{g_*}
\newcommand{\vecu}{\boldsymbol{u}}
\newcommand{\vecv}{\boldsymbol{v}}
\newcommand{\vecg}{\boldsymbol{g}}
\newcommand{\matk}{\boldsymbol{K}}
\newcommand{\atilde}{\tilde a}
\newcommand{\Utilde}{\widetilde U}
\newcommand{\Vtilde}{\widetilde V}
\newcommand{\matM}{\boldsymbol{M}}
\newcommand{\matB}{\boldsymbol{B}}
\newcommand{\vecU}{\boldsymbol{U}}
\newcommand{\vecG}{\boldsymbol{G}}
\begin{document}

\maketitle

\renewcommand{\thefootnote}{\fnsymbol{footnote}}
\footnotetext[2]{School of Mathematics and Statistics, The University 
of New South Wales, Sydney 2052, Australia.}
\footnotetext[3]{Department of Mathematics and Statistics, King Fahd 
University of Petroleum and Minerals, Dhahran 31261, Saudi Arabia.}
\renewcommand{\thefootnote}{\arabic{footnote}}
\begin{abstract}
We study two schemes for a time-fractional Fokker--Planck equation 
with space- and time-dependent forcing in one space dimension.  The 
first scheme is continuous in time and is discretized in space using 
a piecewise-linear Galerkin finite element method.  The second is 
continuous in space and employs a time-stepping 
procedure similar to the classical implicit Euler method.  We show 
that the space discretization is second-order accurate in the 
spatial $L_2$-norm, uniformly in time, whereas the 
corresponding error for the time-stepping scheme is $O(k^\alpha)$ for 
a uniform time step~$k$, where $\alpha\in(1/2,1)$ is the fractional 
diffusion parameter.  In numerical experiments using a combined, 
fully-discrete method, we observe convergence behaviour consistent 
with these results.
\end{abstract}
\begin{keywords}
Time-dependent forcing, finite elements, fractional diffusion, 
stability, Gronwall inequality.
\end{keywords}
\begin{AMS}
65M12, 
65M15, 
65M60, 
65Z05, 
35Q84, 
45K05  
\end{AMS}

\section{Introduction}
We investigate the numerical solution of the inhomogeneous, 
time-fractional Fokker--Planck 
equation~\cite{HenryLanglandsStraka2010},
\begin{equation}\label{eq: ibvp}
u_t-\kappa_\alpha\partial_t^{1-\alpha}u_{xx}
	+\mu_\alpha^{-1}\bigl(F\partial_t^{1-\alpha}u\bigr)_x
	=g,
\end{equation}
for $0<x<L$ and $0<t<T$, with initial data~$u(x,0)=u_0(x)$ and 
subject to homogeneous Dirichlet boundary conditions
$u(0,t)=0=u(L,t)$.  (We use subscripts to indicate partial 
derivatives of integer order with respect to $x$~or $t$; for instance,
$u_t=\partial u/\partial t$.)
The parameter~$\kappa_\alpha$ is the  generalized diffusivity 
constant,  $\mu_\alpha$ is the  generalized friction constant, and the 
driving force~$F$ and the source term~$g$ are permitted to be 
functions of both $x$~and $t$.  The subdiffusion parameter~$\alpha$ 
satisfies $0<\alpha<1$, and the fractional time derivative is 
interpreted in the Riemann--Liouville sense; thus, 
$\partial_t^{1-\alpha}=(I^\alpha v)_t$ where $I^\alpha$ is the 
fractional integral of order~$\alpha$,
\begin{align*}
I^\alpha v(t)=\int_0^t \omega_\alpha(t-s) v(s)\,ds
\quad\text{with}\quad
\omega_{\alpha}(t)=\frac{t^{\alpha-1}}{\Gamma(\alpha)}.
\end{align*}

In~1999, Metzler et al.~\cite{MetzlerBarkaiKlafter1999} used a 
discrete master equation to model the behaviour of subdiffusive 
particles in the presence of a driving force~$F(x)$, showing that 
in the diffusive limit the probability density~$u(x,t)$ 
for a particle to be at position~$x$ at time~$t$ obeys a fractional 
Fokker--Planck equation of the form
\begin{equation}\label{eq: ibvp 2}
\begin{aligned}
u_t-\partial_t^{1-\alpha}\bigl(\kappa_\alpha u_{xx}
	-\mu_\alpha^{-1}\bigl(F u\bigr)_x\bigr)=0.
\end{aligned}
\end{equation}
Subsequently, Henry et al.~\cite{HenryLanglandsStraka2010} considered 
the more general case when~$F=F(x,t)$ may depend on~$t$ as well 
as~$x$, and showed that $u$ obeys \eqref{eq: ibvp} with~$g\equiv0$.
The two equations coincide if $F$ is independent of~$t$, but if the 
forcing is time-dependent then \eqref{eq: ibvp 2}
does not properly correspond to any physical stochastic 
process~\cite{HeinsaluPatriarcaGoychukHanggi2007}.

Various numerical (time stepping finite difference) methods have been
proposed for solving \eqref{eq: ibvp 2}, usually for $F$ 
assumed to be either a constant or a function of~$x$ only. The 
starting point was often to rewrite equation~\eqref{eq: ibvp 2} in 
the form
\begin{equation}\label{eq: ibvp 3}
I^{1-\alpha}(u_t)-\kappa_\alpha u_{xx}+\mu_\alpha^{-1}(F u)_x=0,
\end{equation}
in which the first term is a Caputo fractional derivative.
Indeed, \eqref{eq: ibvp 3} is in some ways more convenient than 
\eqref{eq: ibvp 2} for constructing and analyzing the accuracy of 
numerical schemes.  However, the simpler form~\eqref{eq: ibvp 3} is 
not applicable in our case because $F$ may depend on~$t$.

For the numerical solution of~\eqref{eq: ibvp 3} with~$F=F(x)$, 
Deng~\cite{Deng2007} transformed the equation to a system of 
fractional ODEs by discretizing the spatial derivatives and using the 
properties of Riemann--Liouville and Caputo fractional derivatives, 
and then applied a predictor-corrector approach combined with the 
method of lines.  This work also presented a stability and 
convergence analysis. Cao et al.~\cite{CaoFuHuang2012} adopted a 
similar approach for~\eqref{eq: ibvp 2} and solved the resulting 
system of fractional ODEs using a second order, backward Euler scheme. 
Chen et al.~\cite{ChenLiuZhuangAnh2009} studied the stability 
and convergence properties of three implicit finite difference 
techniques, in each of which the diffusion term was approximated by 
the standard second order difference approximation at 
the advanced time level. In related work, Jiang~\cite{Jiang2015}
established monotonicity properties of the numerical solutions 
obtained by using these schemes, and so showed that the time-stepping 
preserves non-negativity of the solution. Based on this property, a 
new proof of stability and convergence was provided. 

Fairweather et al.~\cite{FairweatherZhangYangXu2014} investigated the 
stability and convergence of an orthogonal spline collocation method  
in space combined  with the backward Euler method in time, based on 
the L1~approximation of the fractional derivative. In an earlier 
work,  Saadmandi~\cite{SaadatmandiDehghanAzizi2012} studied a 
collocation method based on shifted Legendre polynomials in time and 
Sinc functions in space. Recently, Vong and Wang~\cite{VongWang2015} 
have analysed a high order, compact difference scheme 
for~\eqref{eq: ibvp 3}, and Cui~\cite{Cui2015} has considered a 
more general fractional convection--diffusion equation, 
\[
I^{1-\alpha}u_t-(au_x)_x+bu_x+cu=f, 
\]
with coefficients $a$, $b$, $c$ that may depend on $x$~and $t$, 
applying a high-order approximation for the time fractional derivative 
combined with a compact exponential finite difference scheme for 
approximating the convection and diffusion terms.  Stability (using 
Fourier methods) and an estimate for the local truncation error were  
obtained in the case of constant coefficients.  We are not aware of 
any previous analysis on the numerical solution of~\eqref{eq: ibvp} 
for a general~$F$ depending on both $x$~and $t$.  
  
In Section~\ref{sec: prelim} we gather together some preliminary 
results needed in our subsequent analysis, including continuous and 
discrete versions of a generalized Gronwall inequality involving the 
Mittag--Leffler function in place of the usual exponential.  One of 
these results (Lemma~\ref{lem: partial v v_t}) holds only 
for~$1/2<\alpha<1$ so much of our theory requires this restriction. 
Section~\ref{sec: space} deals with a spatial discretization 
of~\eqref{eq: ibvp} by a continuous, piecewise-linear Galerkin finite 
element method.  We prove stability of the scheme in Theorem~\ref{thm: 
stability Fx} and, under weaker assumptions on~$F$ but with a worse 
bound, in Theorem~\ref{thm: stability E}.  An error estimate follows 
in Theorem~\ref{thm: uh error} showing second-order accuracy 
in~$L_\infty\bigl((0,T),L_2(\Omega)\bigr)$, where $\Omega=(0,L)$ 
denotes the spatial interval.  We 
then study a time stepping scheme in Section~\ref{sec: time}, proving 
a stability estimate in Theorem~\ref{thm: time stepping stability} 
and then an error bound in Theorem~\ref{thm: time error}, assuming a 
constant time step~$k$.  This scheme, which is continuous in space, is 
formally first-order accurate but, owing to the weakly singular kernel 
in the fractional integral, we are able to show only that the error 
in $L_2(\Omega)$ at the $n$th time level is $O(k^\alpha)$.  
Section~\ref{sec: numerical} reports on numerical experiments with 
a fully discrete scheme based on the semi-discrete ones analyzed in 
Sections \ref{sec: space}~and \ref{sec: time}.  We observe $O(k+h^2)$ 
convergence when $\alpha$ is close to~$1$, or when we use an 
appropriately graded mesh in time.  The experiments give no evidence 
that the methods fail if~$0<\alpha\le1/2$, although the convergence 
rate deteriorates as~$\alpha$ decreases when using a uniform time 
step.  We also apply our method to a problem from a recent paper by 
Angstmann et al.~\cite{AngstmannEtAl2015} and investigate whether the 
regularity of the initial data affects the stability of the methods. 
A brief appendix proves a technical result 
(Lemma~\ref{lem: a_n positive}) used in showing stability of the 
time-stepping procedure.
\section{Technical preliminaries}\label{sec: prelim}

Lemmas~\ref{lem: ||v(t)||^2}--\ref{lem: partial v v_t} below 
summarize some properties of fractional integrals that will be needed 
in our analysis.  In each case, we assume that the function $v(t)$ is 
defined for~$0\le t\le T$ and takes values in a Hilbert space with 
inner product~$\iprod{\cdot,\cdot}$ and norm~$\|\cdot\|$, and is 
sufficiently regular for the integrand on the right-hand side to be 
absolutely integrable.

\begin{lemma}\label{lem: ||v(t)||^2}
For $0<\alpha<1$ and $0\le t\le T$,
\[
\|v(t)-v(0)\|^2\le\frac{t^{1-\alpha}}{1-\alpha}
	\int_0^t\|I^{\alpha/2}v_t(s)\|^2\,ds.
\]
\end{lemma}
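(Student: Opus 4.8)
The plan is to rewrite $v(t)-v(0)$ as a fractional integral acting on $I^{\alpha/2}v_t$ and then invoke the Cauchy--Schwarz inequality. The starting point is the identity $v(t)-v(0)=\int_0^t v_t(s)\,ds=I^1v_t(t)$, which holds because $\omega_1\equiv1$. Writing the exponent as $1=(1-\alpha/2)+\alpha/2$ and using the semigroup property $I^aI^b=I^{a+b}$ of the Riemann--Liouville fractional integral, I would factor $I^1v_t=I^{1-\alpha/2}\bigl(I^{\alpha/2}v_t\bigr)$. Setting $w=I^{\alpha/2}v_t$, this gives
\[
v(t)-v(0)=\int_0^t \omega_{1-\alpha/2}(t-s)\,w(s)\,ds .
\]

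I would then take norms, using the triangle inequality for the (Hilbert-space-valued) integral to get $\|v(t)-v(0)\|\le\int_0^t\omega_{1-\alpha/2}(t-s)\,\|w(s)\|\,ds$, and apply Cauchy--Schwarz to this scalar integral, pairing the kernel against $\|w\|$. After squaring, this yields
\[
\|v(t)-v(0)\|^2\le\Bigl(\int_0^t\omega_{1-\alpha/2}(t-s)^2\,ds\Bigr)\int_0^t\|w(s)\|^2\,ds .
\]

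It remains to evaluate the kernel integral. Since $\omega_{1-\alpha/2}(t-s)^2=(t-s)^{-\alpha}/\Gamma(1-\alpha/2)^2$, the substitution $\tau=t-s$ gives $\int_0^t\omega_{1-\alpha/2}(t-s)^2\,ds=t^{1-\alpha}/\bigl[(1-\alpha)\,\Gamma(1-\alpha/2)^2\bigr]$; note that convergence of this integral is exactly where the hypothesis $\alpha<1$ enters. Finally, because $1-\alpha/2\in(1/2,1)$ and $\Gamma$ is decreasing there with $\Gamma(1)=1$, we have $\Gamma(1-\alpha/2)\ge1$, so the factor $\Gamma(1-\alpha/2)^{-2}\le1$ may be discarded, leaving the constant $t^{1-\alpha}/(1-\alpha)$ and $\|w\|^2=\|I^{\alpha/2}v_t\|^2$ as required.

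The calculation is elementary once the factorization is chosen correctly, so I do not expect a serious obstacle. The one genuine decision is to split $I^1$ precisely as $I^{1-\alpha/2}I^{\alpha/2}$, so that exactly the quantity $I^{\alpha/2}v_t$ appearing in the statement is produced; the only point needing care is discarding the Gamma-function factor in the correct direction, which rests on the monotonicity of $\Gamma$ on $(1/2,1)$.
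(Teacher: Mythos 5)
Your proof is correct and is essentially identical to the paper's argument: the same factorization $I^1v_t=I^{1-\alpha/2}\bigl(I^{\alpha/2}v_t\bigr)$, the same Cauchy--Schwarz splitting of the kernel against $\|I^{\alpha/2}v_t\|$, and the same discarding of the factor $\Gamma(1-\alpha/2)^{-2}\le1$.
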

\begin{proof}
Put $w(t)=I^{\alpha/2}v_t$ so that 
$v(t)-v(0)=I^1v_t=I^{1-\alpha/2}w(t)$ and
\begin{align*}
\|v(t)-v(0)\|^2&\le\biggl(\int_0^t\omega_{1-\alpha/2}(t-s)\|w(s)\|\,ds
	\biggr)^2\\
	&\le\biggl(\int_0^t\frac{(t-s)^{-\alpha}}{\Gamma(1-\alpha/2)^2}
	\,ds\biggr)\biggl(\int_0^t\|w(s)\|^2\,ds\biggr),
\end{align*}
giving the desired bound, because $\Gamma(1-\alpha/2)\ge1$.
\end{proof}

\begin{lemma}\label{lem: I alpha/2} 
For $0<\alpha<1$,
\[
\int_0^T\|I^{\alpha/2}v(t)\|^2\,dt  
	\le\frac{1}{\cos(\alpha\pi/2)}
	\int_0^T\bigiprod{I^\alpha v(t),v(t)}\,dt.
\]
\end{lemma}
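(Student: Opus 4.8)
The plan is to recast both sides as integrals over the whole real line and then apply Plancherel's theorem. I would extend $v$ by zero outside $[0,T]$ and call the result $f$, a causal function supported in $[0,T]$. Since $I^\alpha$ and $I^{\alpha/2}$ are convolutions against the causal kernels $\omega_\alpha$ and $\omega_{\alpha/2}$, the values $I^\alpha f(t)$ and $I^{\alpha/2}f(t)$ for $t\in[0,T]$ coincide with $I^\alpha v(t)$ and $I^{\alpha/2}v(t)$, while $f(t)=0$ for $t\notin[0,T]$. Consequently the integrand $\bigiprod{I^\alpha f(t),f(t)}$ vanishes outside $[0,T]$, so
\[
\int_\R\bigiprod{I^\alpha f(t),f(t)}\,dt=\int_0^T\bigiprod{I^\alpha v(t),v(t)}\,dt,
\]
whereas, because $I^{\alpha/2}f(t)$ may be nonzero for $t>T$ and the integrand is nonnegative,
\[
\int_\R\|I^{\alpha/2}f(t)\|^2\,dt\ge\int_0^T\|I^{\alpha/2}v(t)\|^2\,dt.
\]
Hence it suffices to prove the \emph{equality} $\int_\R\bigiprod{I^\alpha f,f}\,dt=\cos(\alpha\pi/2)\int_\R\|I^{\alpha/2}f\|^2\,dt$ on the line, after which the two relations above give the stated bound.

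To establish the line identity I would work in the real Hilbert space setting (the vector-valued case reduces, by expanding in an orthonormal basis and summing the nonnegative scalar contributions, to scalar $f$). With $\hat f(\xi)=\int_\R f(t)e^{-i\xi t}\,dt$, the Fourier symbol of convolution against $\omega_\beta$ is $(i\xi)^{-\beta}$ (principal branch), so $\widehat{I^\alpha f}=(i\xi)^{-\alpha}\hat f$ and $\widehat{I^{\alpha/2}f}=(i\xi)^{-\alpha/2}\hat f$, whence $|\widehat{I^{\alpha/2}f}(\xi)|^2=|\xi|^{-\alpha}|\hat f(\xi)|^2$. Plancherel's theorem then gives
\[
\int_\R\bigiprod{I^\alpha f,f}\,dt=\frac{1}{2\pi}\int_\R(i\xi)^{-\alpha}|\hat f(\xi)|^2\,d\xi.
\]
Since the left-hand side is real it equals the real part of the right-hand side, and $\mathrm{Re}\,(i\xi)^{-\alpha}=\cos(\alpha\pi/2)\,|\xi|^{-\alpha}$ for every $\xi\ne0$ (the branch contributes $e^{-i\alpha\pi/2}$ when $\xi>0$ and $e^{i\alpha\pi/2}$ when $\xi<0$). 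Thus the right-hand side equals $\cos(\alpha\pi/2)\,\tfrac{1}{2\pi}\int_\R|\xi|^{-\alpha}|\hat f|^2\,d\xi=\cos(\alpha\pi/2)\int_\R\|I^{\alpha/2}f\|^2\,dt$, as required. The integrals are finite because $\alpha<1$ makes $|\xi|^{-\alpha}$ locally integrable near the origin, while $\hat f\in L_2(\R)$ controls the tail.

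The main obstacle is making the Fourier-multiplier step rigorous, since $\omega_\alpha\notin L_1(\R)$ — its tail $t^{\alpha-1}$ is not integrable at infinity — so $\widehat{\omega_\alpha}$ exists only as a tempered distribution and the formula $\widehat{\omega_\alpha*f}=(i\xi)^{-\alpha}\hat f$ cannot be invoked naively. I would handle this by regularization: replace $\omega_\beta(t)$ by $e^{-\varepsilon t}\omega_\beta(t)\in L_1(0,\infty)$, whose genuine Fourier transform is $(\varepsilon+i\xi)^{-\beta}$, prove Plancherel for the regularized convolution, and then let $\varepsilon\to0^+$. The passage to the limit is justified by dominated convergence, using the pointwise limit $(\varepsilon+i\xi)^{-\beta}\to(i\xi)^{-\beta}$ together with the uniform bound $|(\varepsilon+i\xi)^{-\beta}|\le|\xi|^{-\beta}$ (because $|\varepsilon+i\xi|\ge|\xi|$), which provides the integrable dominating function $|\xi|^{-\alpha}|\hat f|^2$ already shown to be finite.
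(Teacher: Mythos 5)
Your argument is correct. Note, however, that the paper does not prove this lemma at all: it simply cites Mustapha and Sch\"otzau \cite[Lemma~3.1~(ii)]{MustaphaSchoetzau2014}, so there is no internal proof to compare against. What you have written is essentially the standard Fourier-analytic proof that underlies such coercivity results for the Riemann--Liouville integral: extend $v$ by zero, reduce the Hilbert-space-valued case to scalar components via an orthonormal expansion (legitimate, since both sides decompose into sums of nonnegative, respectively real, componentwise terms), apply Plancherel with the symbol $(i\xi)^{-\beta}$, and use $\mathrm{Re}\,(i\xi)^{-\alpha}=\cos(\alpha\pi/2)|\xi|^{-\alpha}$ together with $|(i\xi)^{-\alpha/2}|^2=|\xi|^{-\alpha}$; the one-sided inequality then comes exactly as you say, from the fact that $\langle I^\alpha f,f\rangle$ vanishes off $[0,T]$ while $\|I^{\alpha/2}f\|^2\ge0$ there. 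Your treatment of the only delicate point --- that $\omega_\beta\notin L_1(\mathbb{R})$, so the multiplier identity must be obtained by the exponential regularization $e^{-\varepsilon t}\omega_\beta(t)$ with symbol $(\varepsilon+i\xi)^{-\beta}$ and a dominated-convergence passage using $|(\varepsilon+i\xi)^{-\beta}|\le|\xi|^{-\beta}$ and the local integrability of $|\xi|^{-\alpha}$ --- is the right fix and is standard (one can also use Fatou on the time side for the $I^{\alpha/2}$ term to avoid proving two-sided convergence there). The benefit of your route is that it makes the paper self-contained and exhibits where the constant $1/\cos(\alpha\pi/2)$ comes from, at the cost of the measure-theoretic bookkeeping that the citation hides.
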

\begin{proof}
Mustapha and Sch\"otzau~\cite[Lemma~3.1 (ii)]{MustaphaSchoetzau2014}. 
\end{proof}

\begin{lemma}\label{lem: int I alpha}
For $0<\alpha<1$,
\[
\int_0^T\|I^\alpha v(t)\|^2\,dt\le\omega_{\alpha+1}(T)
	\int_0^T\omega_\alpha(T-t)\int_0^t\|v(s)\|^2\,ds\,dt.
\]
\end{lemma}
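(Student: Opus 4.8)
The plan is to bound $\|I^\alpha v(t)\|$ pointwise in $t$ by a weighted Cauchy--Schwarz argument that peels off the integrated kernel, and then to integrate in $t$ and rearrange by Fubini to reach the stated nested form. First I would write $I^\alpha v(t)=\int_0^t\omega_\alpha(t-s)v(s)\,ds$ and use the triangle inequality for the vector-valued integral (legitimate by the absolute integrability assumed in the setup) to get $\|I^\alpha v(t)\|\le\int_0^t\omega_\alpha(t-s)\|v(s)\|\,ds$. Splitting the kernel as $\omega_\alpha(t-s)=\omega_\alpha(t-s)^{1/2}\cdot\omega_\alpha(t-s)^{1/2}$ and applying Cauchy--Schwarz then gives
\[
\|I^\alpha v(t)\|^2\le\Bigl(\int_0^t\omega_\alpha(t-s)\,ds\Bigr)
	\Bigl(\int_0^t\omega_\alpha(t-s)\|v(s)\|^2\,ds\Bigr).
\]
The first factor is elementary: substituting $r=t-s$ yields $\int_0^t\omega_\alpha(r)\,dr=\omega_{\alpha+1}(t)$.

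Next I would integrate this pointwise estimate over $0<t<T$. Because $\alpha>0$, the weight $\omega_{\alpha+1}(t)=t^\alpha/\Gamma(\alpha+1)$ is nondecreasing, so $\omega_{\alpha+1}(t)\le\omega_{\alpha+1}(T)$ and the leading constant pulls out:
\[
\int_0^T\|I^\alpha v(t)\|^2\,dt\le\omega_{\alpha+1}(T)
	\int_0^T\int_0^t\omega_\alpha(t-s)\|v(s)\|^2\,ds\,dt.
\]

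The final step---and the only place that needs genuine care---is to reconcile this double integral, whose kernel is $\omega_\alpha(t-s)$, with the one in the statement, whose kernel $\omega_\alpha(T-t)$ acts on the cumulative quantity $\int_0^t\|v(s)\|^2\,ds$. These integrands differ, but both collapse to the same value after a Fubini interchange over the triangle $0\le s\le t\le T$. Indeed $\int_s^T\omega_\alpha(t-s)\,dt=\omega_{\alpha+1}(T-s)$ and, after the substitution $r=T-t$, also $\int_s^T\omega_\alpha(T-t)\,dt=\omega_{\alpha+1}(T-s)$, since each kernel sweeps out $[0,T-s]$. Hence both double integrals equal $\int_0^T\omega_{\alpha+1}(T-s)\|v(s)\|^2\,ds$, and recognizing this coincidence of the two weightings is the crux of the argument. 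Once it is in hand the right-hand side matches the claim exactly; the nested form is the one that feeds cleanly into the subsequent Gronwall-type estimates.
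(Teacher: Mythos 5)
Your proof is correct and follows essentially the same route as the paper: Cauchy--Schwarz with the split kernel $\omega_\alpha(t-s)^{1/2}\cdot\omega_\alpha(t-s)^{1/2}$, pulling out $\omega_{\alpha+1}(t)\le\omega_{\alpha+1}(T)$, and then a Fubini interchange exploiting the fact that $\int_s^T\omega_\alpha(t-s)\,dt=\int_s^T\omega_\alpha(T-t)\,dt=\omega_{\alpha+1}(T-s)$ to convert the kernel $\omega_\alpha(t-s)$ into the stated form with $\omega_\alpha(T-t)$. This is exactly the paper's argument, including the observation you correctly identify as the crux.
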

\begin{proof}
Since
\begin{align*}
\|I^\alpha v(t)\|^2&\le\biggl(
	\int_0^t\omega_\alpha(t-s)\|v(s)\|\,ds\biggr)^2\\
	&\le\biggl(\int_0^t\omega_\alpha(t-s)\,ds\biggr)
	\biggl(\int_0^t\omega_\alpha(t-s)\|v(s)\|^2\,ds\biggr)\\
	&=\omega_{\alpha+1}(t)\int_0^t
		\omega_\alpha(t-s)\|v(s)\|^2\,ds
\end{align*}
we have
\[
\int_0^T\|I^\alpha v(t)\|^2\,dt\le\omega_{\alpha+1}(T)
	\int_0^T\int_0^t\omega_\alpha(t-s)\|v(s)\|^2\,ds\,dt,
\]
and the double integral on the right equals
\[
\int_0^T\|v(s)\|^2\int_s^T\omega_\alpha(t-s)\,dt\,ds
=\int_0^T\|v(s)\|^2\int_s^T\omega_\alpha(T-t)\,dt\,ds.
\]
The result follows after reversing the order of integration again.
\end{proof}

\begin{lemma}\label{lem: partial v v_t}
For $1/2<\alpha<1$, 
\[
\int_0^T\|\partial_t^{1-\alpha}v(t)\|^2\,dt
		\le\frac{1}{(2\alpha-1)\Gamma(\alpha)^2}\biggl(
	T^{2\alpha-1}\|v(0)\|^2+T^{2\alpha}\int_0^T\|v_t\|^2\,dt\biggr).
\]
\end{lemma}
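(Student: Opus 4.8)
The plan is to reduce the Riemann--Liouville derivative to a boundary term plus a fractional integral of $v_t$, and then to bound the two contributions separately by Cauchy--Schwarz applied with the \emph{squared} kernel $\omega_\alpha^2$, whose integrability near the diagonal is exactly what forces $\alpha>1/2$. First I would record the identity
\[
\partial_t^{1-\alpha}v(t)=\omega_\alpha(t)\,v(0)+I^\alpha v_t(t),
\]
obtained by writing $v(s)=v(0)+\int_0^s v_t(\tau)\,d\tau$, inserting this into $I^\alpha v(t)=\int_0^t\omega_\alpha(t-s)v(s)\,ds$, and differentiating in $t$: the constant part contributes $\frac{d}{dt}\bigl(\omega_{\alpha+1}(t)v(0)\bigr)=\omega_\alpha(t)v(0)$, while the remainder is $\frac{d}{dt}I^{\alpha+1}v_t=I^\alpha v_t$ after reversing the order of integration. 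This recasts $\partial_t^{1-\alpha}v$ as a singular boundary term carrying the initial value, together with a Caputo-type term built from $v_t$.

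Next I would estimate the two pieces. For the boundary term the computation is exact: since $\omega_\alpha(t)^2=t^{2\alpha-2}/\Gamma(\alpha)^2$ and $\int_0^T t^{2\alpha-2}\,dt=T^{2\alpha-1}/(2\alpha-1)$ --- finite precisely because $2\alpha-1>0$ --- one gets $\int_0^T\|\omega_\alpha(t)v(0)\|^2\,dt=\frac{T^{2\alpha-1}}{(2\alpha-1)\Gamma(\alpha)^2}\|v(0)\|^2$, which is the first term on the right. For the Caputo term I would apply Cauchy--Schwarz by pairing the kernel against itself, rather than using a single power of $\omega_\alpha$ as in Lemma~\ref{lem: int I alpha}:
\[
\|I^\alpha v_t(t)\|^2\le\Bigl(\int_0^t\omega_\alpha(t-s)^2\,ds\Bigr)\int_0^t\|v_t(s)\|^2\,ds
=\frac{t^{2\alpha-1}}{(2\alpha-1)\Gamma(\alpha)^2}\int_0^t\|v_t(s)\|^2\,ds.
\]
Bounding $t^{2\alpha-1}\le T^{2\alpha-1}$ and $\int_0^t\le\int_0^T$, then integrating in $t$ over $(0,T)$ to produce one further factor of $T$, yields $\int_0^T\|I^\alpha v_t\|^2\,dt\le\frac{T^{2\alpha}}{(2\alpha-1)\Gamma(\alpha)^2}\int_0^T\|v_t\|^2\,dt$, the second term.

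Finally I would assemble the estimate. Writing $\|\cdot\|_{L^2}$ for the norm in $t$ over $(0,T)$, the triangle inequality gives $\|\partial_t^{1-\alpha}v\|_{L^2}\le\|\omega_\alpha(\cdot)v(0)\|_{L^2}+\|I^\alpha v_t\|_{L^2}$, and the two right-hand norms are exactly the square roots of the two quantities computed above; squaring the sum (absorbing the cross term via $2ab\le a^2+b^2$) then reproduces a bound of the stated form, and this last combination is the only place where the precise constant has to be tracked. The main obstacle, and the reason the hypothesis $1/2<\alpha<1$ cannot be relaxed, is the square-integrability of $\omega_\alpha$: the exponent $2\alpha-2$ exceeds $-1$ only when $\alpha>1/2$, so for $\alpha\le1/2$ both the boundary integral $\int_0^T\omega_\alpha^2$ and the kernel integral $\int_0^t\omega_\alpha(t-s)^2\,ds$ diverge, and the entire squared-kernel strategy, along with the conclusion itself, breaks down.
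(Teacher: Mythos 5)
Your proof is correct and follows essentially the same route as the paper: the identity $\partial_t^{1-\alpha}v(t)=\omega_\alpha(t)v(0)+I^\alpha v_t(t)$, followed by Cauchy--Schwarz with the squared kernel $\omega_\alpha(t-s)^2$, whose integrability is exactly where $\alpha>1/2$ enters. The only wrinkle is the final combination: your triangle-inequality-plus-cross-term step, like the paper's own $\|a+b\|^2\le2\|a\|^2+2\|b\|^2$, yields the stated bound with an extra factor of $2$ in front, so your argument is no weaker than the paper's.
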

\begin{proof}
The identity 
$\partial_t^{1-\alpha}v(t)=v(0)\omega_\alpha(t)+I^\alpha v_t(t)$
implies that
\[
\int_0^T\|\partial_t^{1-\alpha}v(t)\|^2\,dt
	\le2\|v(0)\|^2\int_0^T\omega_\alpha(t)^2\,dt
	+2\int_0^T\|I^\alpha v_t\|^2\,dt,
\]
and the Cauchy--Schwarz inequality gives
\begin{align*}
\int_0^T\|I^\alpha v_t\|^2\,dt
	&\le\int_0^T\biggl(\int_0^t\omega_\alpha(t-s)\|v_t(s)\|\,ds
	\biggr)^2\,dt\\
	&\le\int_0^T\biggl(\int_0^t\omega_\alpha(t-s)^2\,ds\biggr)
	\biggl(\int_0^T\|v_t(s)\|^2\,ds\biggr)\,dt,
\end{align*}
so it suffices to note that
$\int_0^t\omega_\alpha(t-s)^2\,ds
\le T^{2\alpha-1}/\bigl((2\alpha-1)\Gamma(\alpha)^2\bigr)$
for~$\alpha>1/2$.
\end{proof}

The existence and uniqueness of our spatially discrete solution 
to~\eqref{eq: ibvp} will follow from the following result for
an $m\times m$~system of weakly singular integral equations.  Here,
$|\cdot|$ may denote any matrix norm on~$\R^{m\times m}$ induced by a 
norm on~$\R^m$.

\begin{theorem}\label{thm: Volterra}
There exists a unique continuous solution~$\vecu:[0,\infty)\to\R^m$
to the linear Volterra integral equation
\[
\vecu(t)+\int_0^t\matk(t,s)\vecu(s)\,ds=\vecg(t)\quad
	\text{for $0\le t<\infty$,}
\]
if the following conditions are satisfied:
\begin{enumerate}
\item $\vecg:[0,\infty)\to\R^m$ is continuous;
\item $\matk(t,s)\in\R^{m\times m}$ is continuous 
for~$0\le s<t<\infty$;
\item for any continuous function~$\vecv:[0,\infty)\to\R^m$, the 
integrals
\[
\int_0^t\matk(t,s)\vecv(s)\,ds
\quad\text{and}\quad
\int_0^t|\matk(t,s)|\,ds
\]
exist and are continuous for~$0\le t<\infty$;
\item there exist constants $\gamma>0$~and $\epsilon>0$ such that
\[
\int_0^t e^{-\gamma(t-s)}|\matk(t,s)|\,ds\le1-\epsilon
	\quad\text{for $0\le t<\infty$.}
\]
\end{enumerate}
\end{theorem}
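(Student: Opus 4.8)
The plan is to realize the solution as the fixed point of the integral operator
\[
(\Phi\vecv)(t)=\vecg(t)-\int_0^t\matk(t,s)\vecv(s)\,ds,
\]
first on an arbitrary bounded interval $[0,b]$ and then to assemble the local solutions into a global one. First I would fix $b>0$ and work in the Banach space $C\bigl([0,b],\R^m\bigr)$ equipped with the weighted norm $\|\vecv\|_\gamma=\sup_{0\le t\le b}e^{-\gamma t}|\vecv(t)|$; since $e^{-\gamma b}\le e^{-\gamma t}\le1$ on $[0,b]$, this norm is equivalent to the usual supremum norm, so the space is complete. Hypotheses~(1) and~(3) ensure that $\Phi\vecv$ is again continuous on $[0,b]$ whenever $\vecv$ is, so $\Phi$ maps this space into itself.

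The key observation is that the exponential weight is chosen precisely so that hypothesis~(4) makes $\Phi$ a contraction. Indeed, for continuous $\vecv_1,\vecv_2$ the affine operator satisfies
\begin{align*}
e^{-\gamma t}\bigl|(\Phi\vecv_1-\Phi\vecv_2)(t)\bigr|
&\le\int_0^t e^{-\gamma(t-s)}|\matk(t,s)|\,
   e^{-\gamma s}|\vecv_1(s)-\vecv_2(s)|\,ds\\
&\le(1-\epsilon)\,\|\vecv_1-\vecv_2\|_\gamma,
\end{align*}
where the last inequality uses~(4) together with the definition of $\|\cdot\|_\gamma$. Taking the supremum over $t\in[0,b]$ gives $\|\Phi\vecv_1-\Phi\vecv_2\|_\gamma\le(1-\epsilon)\|\vecv_1-\vecv_2\|_\gamma$ with $1-\epsilon<1$, so the Banach fixed-point theorem produces a unique $\vecu_b\in C\bigl([0,b],\R^m\bigr)$ with $\Phi\vecu_b=\vecu_b$, that is, a unique continuous solution on $[0,b]$.

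To pass to $[0,\infty)$ I would use uniqueness to patch these local solutions together. If $0<b<b'$, then the restriction of $\vecu_{b'}$ to $[0,b]$ solves the same equation there, because the equation at a time $t\le b$ involves only the values of the unknown on $[0,t]\subseteq[0,b]$; uniqueness then forces $\vecu_{b'}|_{[0,b]}=\vecu_b$. The family $\{\vecu_b\}$ is therefore consistent, and setting $\vecu(t)=\vecu_b(t)$ for any $b\ge t$ defines a single continuous function on $[0,\infty)$ that satisfies the equation everywhere; any global solution restricts to $\vecu_b$ on each $[0,b]$ and hence equals $\vecu$. The hard part is not the contraction estimate, which is routine once the correct weighted norm is identified, but the verification that $\Phi$ preserves continuity even though $\matk(t,s)$ is only controlled for $s<t$ and may be singular on the diagonal $s=t$; this is exactly what the separation of hypotheses~(2) and~(3) is designed to provide, and it is what allows the argument to cover the weakly singular kernels arising from the fractional integral.
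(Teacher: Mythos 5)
Your argument is correct. Note, though, that the paper does not prove this statement at all: it is quoted from Becker (Corollary~2.3 of the cited 2011 paper), so there is no in-paper proof to compare against; what you have written is essentially a self-contained reconstruction of the standard argument underlying Becker's result, in which hypothesis~4 is exactly the condition making the Volterra operator a contraction in an exponentially weighted sup-norm. Two small remarks. First, hypothesis~3 is phrased for continuous $\vecv$ defined on all of $[0,\infty)$, while your fixed-point space consists of functions on $[0,b]$; this is harmless because you can extend any $\vecv\in C\bigl([0,b],\R^m\bigr)$ continuously (say by the constant value $\vecv(b)$ for $t>b$) and the integral up to $t\le b$ only sees the values on $[0,t]$, but it is worth saying explicitly since it is precisely how the continuity of $\Phi\vecv$ and the measurability/integrability near the weakly singular diagonal get imported from the hypothesis. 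Second, your choice to contract on each $[0,b]$ and then patch by uniqueness is the right one: a single contraction on $[0,\infty)$ in the norm $\sup_{t\ge0}e^{-\gamma t}|\vecv(t)|$ would require that weighted norm of $\vecg$ to be finite, which is not among the hypotheses, whereas on a bounded interval the weighted norm is equivalent to the usual sup-norm and no growth restriction on $\vecg$ is needed.
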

\begin{proof}
Becker~\cite[Corollary~2.3]{Becker2011}.
\end{proof}

Our stability analysis of the spatially discrete solution makes use of 
the following weakly singular Gronwall inequality, involving the 
Mittag--Leffler function
\begin{equation}\label{eq: ML func}
E_\beta(z)=\sum_{n=0}^\infty\frac{z^n}{\Gamma(1+n\beta)}.
\end{equation}
The usual Gronwall inequality is just the special case~$\beta=1$,
because $E_1(z)=e^z$.

\begin{lemma}\label{lem: Gronwall}
Let $\beta>0$ and $T>0$.  Assume that $a$~and $b$ are non-negative 
and non-decreasing functions on the interval~$[0,T]$. If
$y:[0,T]\to\R$ is a locally integrable function satisfying
\[
0\le y(t)\le a(t)+b(t)\int_0^t\omega_\beta(t-s)y(s)\,ds
	\quad\text{for $0\le t\le T$,}
\]
then
\[
y(t)\le a(t)E_\beta\bigl(b(t)t^\beta\bigr)
\quad\text{for $0\le t\le T$.}
\]
\end{lemma}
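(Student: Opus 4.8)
The plan is to rewrite the hypothesis as the operator inequality $y\le a+By$, where $B$ denotes the positive linear operator $(B\phi)(t)=b(t)\int_0^t\omega_\beta(t-s)\phi(s)\,ds$, and then to iterate. Since $B$ is linear and maps non-negative functions to non-negative functions, it is monotone on such functions; applying it repeatedly to $y\le a+By$ yields
\[
y\le\sum_{k=0}^{n-1}B^k a+B^n y\quad\text{for every }n\ge1.
\]
The result will follow once I show that, at any fixed $t$, the partial sums converge to $a(t)E_\beta\bigl(b(t)t^\beta\bigr)$ and that the remainder $(B^n y)(t)$ vanishes as $n\to\infty$.

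First I would fix the evaluation point $t\in[0,T]$ and exploit the monotonicity of $b$ to replace the variable factor $b(\cdot)$ by the constant $b(t)$. For $0\le\tau\le t$ and $\phi\ge0$ we have $(B\phi)(\tau)\le b(t)(I^\beta\phi)(\tau)=(\tilde B\phi)(\tau)$, where $\tilde B=b(t)I^\beta$. Since both $B$ and $\tilde B$ preserve positivity and $\tilde B$ is monotone, an easy induction gives $B^k\phi\le\tilde B^k\phi$ on $[0,t]$ whenever $\phi\ge0$. The advantage of $\tilde B$ is that it is a constant multiple of the fractional integral, so the semigroup identity $I^\beta I^\beta=I^{2\beta}$ and its iterates give $\tilde B^k\phi=b(t)^k I^{k\beta}\phi$.

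Next I would bound $\tilde B^k a$ using the monotonicity of $a$: for $s\le\tau\le t$ we have $a(s)\le a(t)$, so
\[
(\tilde B^k a)(\tau)=b(t)^k\int_0^\tau\omega_{k\beta}(\tau-s)a(s)\,ds
\le a(t)\,b(t)^k\int_0^\tau\omega_{k\beta}(\tau-s)\,ds
= a(t)\,\frac{(b(t)\tau^\beta)^k}{\Gamma(k\beta+1)}.
\]
Evaluating at $\tau=t$, summing over $k$, and using $B^k a\le\tilde B^k a$ reproduces the Mittag--Leffler series:
\[
\sum_{k=0}^\infty (B^k a)(t)\le\sum_{k=0}^\infty a(t)\frac{(b(t)t^\beta)^k}{\Gamma(k\beta+1)}
=a(t)E_\beta\bigl(b(t)t^\beta\bigr).
\]

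The main obstacle, and the only genuinely delicate point, is the remainder term. Using $(B^n y)(t)\le(\tilde B^n y)(t)=b(t)^n\int_0^t\omega_{n\beta}(t-s)y(s)\,ds$ together with $(t-s)^{n\beta-1}\le t^{n\beta-1}$ for $n\beta\ge1$, I would bound this by $\bigl(b(t)^n t^{n\beta-1}/\Gamma(n\beta)\bigr)\int_0^t y(s)\,ds$. The integral is finite because $y$ is locally integrable, and the prefactor tends to $0$ because $\Gamma(n\beta)$ grows factorially while $(b(t)t^\beta)^n$ grows only geometrically; hence $(B^n y)(t)\to0$, and letting $n\to\infty$ completes the proof. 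I would also remark that the local integrability of $y$, the boundedness of the non-decreasing function $a$, and $\omega_\beta\in L_1(0,T)$ guarantee, via Young's inequality for convolutions, that every iterate $B^k y$ and $B^k a$ is well defined and integrable, so the formal manipulations above are justified.
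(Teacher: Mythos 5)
Your proof is correct. Note that the paper does not prove this lemma at all: it simply cites Dixon--McKee and Ye--Gao--Ding, and your argument is essentially the standard iteration proof found in those references, so in substance you are reconstructing the cited proof rather than diverging from the paper. The route is sound at every step: iterating $y\le a+By$ is legitimate because the operator $B$ has a non-negative kernel and hence preserves inequalities between non-negative functions; freezing $b$ at the evaluation point and dominating $B$ by $\tilde B=b(t)I^\beta$ on $[0,t]$ is exactly what lets the semigroup identity $I^\beta I^{k\beta}=I^{(k+1)\beta}$ produce the kernels $\omega_{k\beta}$, whose integrals sum to the Mittag--Leffler series $a(t)E_\beta\bigl(b(t)t^\beta\bigr)$; and the remainder estimate $(B^ny)(t)\le b(t)^n t^{n\beta-1}\Gamma(n\beta)^{-1}\int_0^t y(s)\,ds$ for $n\beta\ge1$ indeed tends to zero since $\Gamma(n\beta)$ grows faster than any geometric sequence, with local integrability of $y$ making the integral finite. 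The only points worth stating a little more carefully in a final write-up are the measure-theoretic ones you already flag: since all kernels and functions are non-negative, Tonelli's theorem guarantees that each iterate $B^k y$ is a well-defined (possibly a.e.-finite) measurable function and that the pointwise inequalities survive each application of $B$, so no appeal to Young's inequality is really needed. Compared with the paper's bare citation, your version buys a self-contained and elementary proof whose only ingredients are the semigroup property of fractional integrals and the superexponential growth of the Gamma function.
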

\begin{proof}
Dixon and McKee~\cite[Theorem~3.1]{DixonMcKee1986};
Ye, Gao and Ding~\cite[Corollary~2]{YeGaoDing2007}.
\end{proof}

We also use a discrete version of this Gronwall inequality to 
establish stability of our time stepping procedure.

\begin{lemma}\label{lem: discrete Gronwall}
Let $0<\beta\le1$, $N>0$, $k>0$ and $t_n=nk$ for $0\le n\le N$.  
Assume that $(A_n)_{n=0}^N$ is a non-negative and non-decreasing 
sequence, and that $B\ge0$. If the sequence~$(y^n)_{n=0}^N$ satisfies
\[
0\le y^n\le A_n+Bk\sum_{j=0}^{n-1}\omega_\beta(t_n-t_j)y^j 
	\quad\text{for $0\le n\le N$,}
\]
then 
\[
y^n\le A_nE_\beta(Bt_n^\beta) \quad\text{for $0\le n\le N$.}
\]
\end{lemma}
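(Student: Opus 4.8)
The plan is to mimic the classical iteration proof of the Gronwall inequality, exploiting the fact that the discrete convolution operator is strictly ``lower triangular'' so that the iteration terminates after finitely many steps and no limiting argument is required. I would write the hypothesis as $y\le A+Ly$ (componentwise), where $(Lv)^n=Bk\sum_{j=0}^{n-1}\omega_\beta(t_n-t_j)v^j$ is a linear operator with non-negative kernel. Since $(Lv)^n$ depends only on $v^0,\dots,v^{n-1}$, one checks by induction that $(L^mv)^n$ depends only on $v^0,\dots,v^{n-m}$, so that $(L^{n+1}v)^n=0$. First I would use the monotonicity of $L$ (non-negative kernel) together with $y\ge0$ to iterate $y\le A+Ly\le A+LA+L^2y\le\cdots$, obtaining the exact, finite bound $y^n\le\sum_{m=0}^n(L^mA)^n$.

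Next I would reduce the iterated kernel to a pure power of $t_n$. Writing $(L^mA)^n$ as an $m$-fold nested sum over $n>j_1>\cdots>j_m\ge0$ and using that $A$ is non-decreasing to replace $A_{j_m}$ by $A_n$, it suffices to bound the purely kernel-dependent quantity $P_m^{(n)}:=(Bk)^m\sum_{n>j_1>\cdots>j_m\ge0}\omega_\beta(t_n-t_{j_1})\cdots\omega_\beta(t_{j_{m-1}}-t_{j_m})$, which obeys the recursion $P_m^{(n)}=Bk\sum_{j=0}^{n-1}\omega_\beta(t_n-t_j)P_{m-1}^{(j)}$ with $P_0^{(n)}=1$. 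The goal is the clean estimate $P_m^{(n)}\le(Bt_n^\beta)^m/\Gamma(1+m\beta)$, which I would establish by induction on $m$: inserting the inductive bound $P_{m-1}^{(j)}\le(Bt_j^\beta)^{m-1}/\Gamma(1+(m-1)\beta)$ leaves the sum $k\sum_{j=0}^{n-1}\omega_\beta(t_n-t_j)t_j^{(m-1)\beta}$ to control.

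The technical heart, and the step I expect to be the main obstacle, is to show that this sum is dominated by the corresponding integral,
\[
k\sum_{j=0}^{n-1}\omega_\beta(t_n-t_j)t_j^{(m-1)\beta}
  \le\int_0^{t_n}\omega_\beta(t_n-s)s^{(m-1)\beta}\,ds
  =\frac{t_n^{m\beta}\,\Gamma(1+(m-1)\beta)}{\Gamma(1+m\beta)},
\]
where the value of the integral follows from the Beta-function identity after the substitution $s=t_n\tau$. The left-hand side is the left-endpoint Riemann sum, with step $k$, of the function $f(s)=\omega_\beta(t_n-s)s^{(m-1)\beta}$ on $[0,t_n]$; since $f$ is a product of two non-negative, non-decreasing factors --- here the restriction $\beta\le1$ is essential, as it makes $(t_n-s)^{\beta-1}$ non-decreasing in $s$ --- the function $f$ is itself non-decreasing, so each left rectangle underestimates the integral over its subinterval. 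Substituting this estimate into the recursion closes the induction and yields $P_m^{(n)}\le(Bt_n^\beta)^m/\Gamma(1+m\beta)$.

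Finally, combining the three ingredients gives
\[
y^n\le\sum_{m=0}^n(L^mA)^n\le A_n\sum_{m=0}^nP_m^{(n)}
  \le A_n\sum_{m=0}^\infty\frac{(Bt_n^\beta)^m}{\Gamma(1+m\beta)}
  =A_n\,E_\beta(Bt_n^\beta),
\]
by the definition~\eqref{eq: ML func} of the Mittag--Leffler function, which is the desired bound. I note that one could instead try to deduce the result by embedding the sequence into a step function and invoking the continuous inequality of Lemma~\ref{lem: Gronwall}, but the direct iteration above avoids the boundary subtleties arising from the excluded diagonal term $j=n$.
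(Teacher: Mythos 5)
Your proof is correct. There is nothing in the paper to compare it against directly: the paper's ``proof'' of Lemma~\ref{lem: discrete Gronwall} is only the citation to Dixon and McKee~\cite[Theorem~6.1]{DixonMcKee1986}, so your argument supplies a self-contained proof, and it follows the standard iterated-kernel route that such weakly singular discrete Gronwall inequalities admit (essentially the spirit of the cited result). The three ingredients all check out. First, strict lower-triangularity of $L$ gives the exact finite Neumann expansion $y^n\le\sum_{m=0}^n(L^mA)^n$; once the iteration terminates after $n+1$ steps you in fact never need $y\ge0$, only the non-negativity of the kernel. Second, monotonicity of $A$ reduces everything to the kernel iterates $P_m^{(n)}$, whose recursion you state correctly (and the bound $P_{m-1}^{(j)}\le(Bt_j^\beta)^{m-1}/\Gamma(1+(m-1)\beta)$ remains valid for the degenerate indices $j<m-1$, where $P_{m-1}^{(j)}=0$). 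Third, the step you flag as the technical heart is sound: for $0<\beta\le1$ and $m\ge1$ the function $s\mapsto\omega_\beta(t_n-s)\,s^{(m-1)\beta}$ is a product of non-negative, non-decreasing factors on $[0,t_n)$, so the left-endpoint sum with the uniform step $k$ is dominated by the integral, and the Beta identity gives $\int_0^{t_n}\omega_\beta(t_n-s)s^{(m-1)\beta}\,ds=t_n^{m\beta}\,\Gamma(1+(m-1)\beta)/\Gamma(1+m\beta)$, which closes the induction; summing $P_m^{(n)}\le(Bt_n^\beta)^m/\Gamma(1+m\beta)$ yields exactly the Mittag--Leffler series~\eqref{eq: ML func}. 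Note that the uniformity of the steps $t_n=nk$ is what makes the Riemann-sum comparison immediate, and your remark about why a reduction to the continuous inequality of Lemma~\ref{lem: Gronwall} is less clean (the excluded diagonal term and the step-function embedding) is apt.
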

\begin{proof}
Dixon and McKee~\cite[Theorem~6.1]{DixonMcKee1986}.
\end{proof}

\section{Spatial discretization}\label{sec: space}
We choose a partition $0=x_0<x_1<x_2<\cdots<x_P=L$ of the spatial 
interval~$\Omega=(0,L)$ and denote the length of the $p$th 
subinterval by~$h_p=x_p-x_{p-1}$ for~$1\le p\le P$.  
With~$h=\max_{1\le p\le P}h_p$, we define the usual space~$\Sh$ of 
continuous, piecewise-linear functions that satisfy the Dirichlet 
boundary conditions, so that $\Sh\subseteq H^1_0(\Omega)$.  Recall
that $F=F(x,t)$~and $g=g(x,t)$.  In our notation, we will often 
suppress the dependence on~$x$ and think of $u=u(x,t)$ as a function 
of~$t$ taking values in~$L_2(\Omega)$.  We also assume that 
$\kappa_\alpha=\mu_\alpha=1$.

In the usual weak formulation of~\eqref{eq: ibvp}, we seek $u$ 
satisfying 
\begin{equation}\label{eq: ibvp weak}
\iprod{u_t,v}+\iprod{\partial_t^{1-\alpha}u_x,v_x}
	-\iprod{F\partial_t^{1-\alpha}u,v_x}=\iprod{g(t),v}
	\quad\text{for $v\in H^1_0(\Omega)$,}
\end{equation}
with $u(0)=u_0$, where $\iprod{\cdot,\cdot}$ denotes the inner 
product in~$L_2(\Omega)$.  For our error analysis, it is useful to 
consider a slightly more general, spatially discrete version 
of~\eqref{eq: ibvp weak}, in which $u_h:[0,T]\to\Sh$ satisfies
\begin{equation}\label{eq: spatially discrete}
\iprod{u_{ht},v}+\iprod{\partial_t^{1-\alpha}u_{hx},v_x}
	-\iprod{F\partial_t^{1-\alpha}u_h,v_x} 
	=\iprod{g(t),v}+\iprod{\gtilde(t),v_x}\\
\end{equation}
for $v\in\Sh$, with $u_h(0)=u_{0h}$ where~$u_0\approx u_{0h}\in\Sh$,
and where $u_{ht}=\partial u_h/\partial t$.  Thus, if 
$\gtilde(x,t)\equiv0$, then $u_h$ is the standard Galerkin finite 
element solution of~\eqref{eq: ibvp weak}.

To show the existence and uniqueness of~$u_h$ 
satisfying~\eqref{eq: spatially discrete}, define the linear 
operator~$B_h(t):\Sh\to\Sh$ (which depends on~$t$ through~$F$) by
\[
\iprod{B_h(t)v,w}=\iprod{v_x,w_x}-\iprod{Fv,w_x}
	\quad\text{for $v$, $w\in\Sh$,}
\]
and the finite element function~$g_h(t)\in\Sh$ by
\[
\iprod{g_h(t),w}=\iprod{g(t),w}+\iprod{\gtilde(t),w_x}
	\quad\text{for $w\in\Sh$.}
\]
The variational equation~\eqref{eq: spatially discrete} is 
then equivalent to
\[
u_{ht}+B_h(t)\partial_t^{1-\alpha}u_h=g_h(t).
\]
Integrating with respect to~$t$, we find that $u_h$ satisfies the 
Volterra equation
\[
u_h(t)+\int_0^t K_h(t,s)u_h(s)\,ds=G_h(t)\quad
	\text{for $0\le t\le T$,}
\]
with the weakly-singular kernel
\[
K_h(t,s)=B_h(t)\omega_\alpha(t-s)
		-\int_s^tB_{ht}(\tau)\omega_\alpha(\tau-s)\,d\tau
\]
and right-hand side
\[
G_h(t)=u_{0h}+\int_0^t g_h(s)\,ds.
\]

\begin{theorem}\label{thm: uh existence}
If $F\in W^1_\infty\bigl((0,T);L_\infty(\Omega)\bigr)$ and 
$g$, $\gtilde\in L_1\bigl((0,T);L_2(\Omega)\bigr)$, then for 
any~$u_{0h}\in\Sh$ there exists a unique continuous 
$u_h:[0,\infty)\to\Sh$ satisfying \eqref{eq: spatially discrete} for 
all~$v\in\Sh$, with $u_h(0)=u_{0h}$.
\end{theorem}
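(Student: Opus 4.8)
The plan is to recast the operator Volterra equation derived above as a \emph{matrix} Volterra equation over~$\R^m$ and then invoke Theorem~\ref{thm: Volterra}. Since $\Sh$ is finite-dimensional, I would fix the nodal basis $\phi_1,\dots,\phi_m$ (with $m=P-1$) and identify $u_h(t)\in\Sh$ with its coefficient vector $\vecu(t)\in\R^m$. Writing $\matM$ for the mass matrix with entries $\iprod{\phi_j,\phi_i}$ and $\matB(t)$ for the matrix with entries $\iprod{\phi_{jx},\phi_{ix}}-\iprod{F(t)\phi_j,\phi_{ix}}$, the operator $B_h(t)$ is represented by $\matM^{-1}\matB(t)$ and the kernel $K_h(t,s)$ by
\[
\matk(t,s)=\matM^{-1}\Bigl(\matB(t)\,\omega_\alpha(t-s)
  -\int_s^t\matB_t(\tau)\,\omega_\alpha(\tau-s)\,d\tau\Bigr),
\]
with $G_h$ represented by $\vecG(t)=\vecu(0)+\int_0^t\vecg(s)\,ds$, where $\vecg(s)$ is the coefficient vector of $g_h(s)$. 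Because the derivation above shows \eqref{eq: spatially discrete} is equivalent, after integration in~$t$, to this Volterra equation, it suffices to produce a unique continuous $\vecu$ solving $\vecu(t)+\int_0^t\matk(t,s)\vecu(s)\,ds=\vecG(t)$. So that the hypotheses of Theorem~\ref{thm: Volterra} apply on all of $[0,\infty)$, I would first extend the data past~$t=T$ by setting $F(\cdot,t)=F(\cdot,T)$ and $g=\gtilde=0$ for $t>T$; this keeps $F\in W^1_\infty$ with $\matB_t$ bounded (and $\matB_t=0$ beyond~$T$) and leaves $\vecG$ continuous.

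Next I would check conditions (1)--(3). For~(1), since $g,\gtilde\in L_1\bigl((0,T);L_2(\Omega)\bigr)$ we have $\vecg\in L_1$, so $\vecG$ is continuous (indeed absolutely continuous). For~(2), the hypothesis $F\in W^1_\infty\bigl((0,T);L_\infty(\Omega)\bigr)$ makes $\matB(t)$ Lipschitz in~$t$ and $\matB_t$ essentially bounded, so $\matk(t,s)$ is continuous for $0\le s<t$. For~(3) the key is the pointwise bound
\[
|\matk(t,s)|\le C_0C_1\,\omega_\alpha(t-s)+C_0C_2\,\omega_{\alpha+1}(t-s),
\]
where $C_0=|\matM^{-1}|$, $C_1=\sup_t|\matB(t)|$ and $C_2=\sup_t|\matB_t(t)|$, and where I used $\int_s^t\omega_\alpha(\tau-s)\,d\tau=\omega_{\alpha+1}(t-s)$. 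Since $\alpha>0$, both weights are integrable in~$s$, so $\int_0^t|\matk(t,s)|\,ds$ and $\int_0^t\matk(t,s)\vecv(s)\,ds$ exist and depend continuously on~$t$.

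The crux is condition~(4), and this is where I expect the real work. Using the same bound and substituting $r=t-s$,
\[
\int_0^t e^{-\gamma(t-s)}|\matk(t,s)|\,ds
  \le C_0C_1\int_0^\infty e^{-\gamma r}\omega_\alpha(r)\,dr
  +C_0C_2\int_0^\infty e^{-\gamma r}\omega_{\alpha+1}(r)\,dr
  =C_0C_1\gamma^{-\alpha}+C_0C_2\gamma^{-\alpha-1},
\]
since the Laplace transform of $\omega_\beta$ is $\gamma^{-\beta}$. The right-hand side tends to~$0$ as $\gamma\to\infty$, so I can fix $\gamma$ large enough that it is at most~$\tfrac12$, giving~(4) with $\epsilon=\tfrac12$. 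Theorem~\ref{thm: Volterra} then yields a unique continuous $\vecu$, hence a unique continuous $u_h:[0,\infty)\to\Sh$ solving the Volterra equation; differentiating (reversing the step used to obtain it) shows that $u_h$ satisfies \eqref{eq: spatially discrete}, with uniqueness inherited from the Volterra equation. The main obstacle is thus not any single estimate but the \emph{structural} observation that the weakly singular kernel, though unbounded, has an exponentially weighted $L_1$ norm that can be driven below~$1$ by choosing the relaxation parameter~$\gamma$ large; the finite-dimensionality of~$\Sh$ together with the $W^1_\infty$ regularity of~$F$ is exactly what makes $C_1$ and $C_2$ finite.
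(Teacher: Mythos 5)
Your proposal is correct and follows essentially the same route as the paper's proof: identify $\Sh$ with $\R^m$ via a basis, bound the weakly singular kernel by $C\bigl[\omega_\alpha(t-s)+\omega_{1+\alpha}(t-s)\bigr]$ using the $W^1_\infty$ regularity of~$F$, and verify condition~4 of Theorem~\ref{thm: Volterra} via the Laplace transform $\int_0^\infty e^{-\gamma r}\omega_\beta(r)\,dr=\gamma^{-\beta}$ with $\gamma$ large. Your explicit extension of the data beyond $t=T$ and the itemized verification of conditions 1--3 are details the paper leaves implicit, but they do not change the argument.
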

\begin{proof}
Let $|\cdot|$ denote any norm on the finite dimensional 
space~$\Sh$.  Our assumptions on $F$, $g$ and $\gtilde$ ensure that 
$G_h$ satisfies condition~1 of Theorem~\ref{thm: Volterra}, and that 
$K_h$ satisfies conditions 2~and 3 (after fixing any basis 
for~$\Sh$).  Furthermore,
\[
|K_h(t,s)|\le C_{F,h}\biggl(\omega_\alpha(t-s)
	+\int_s^t\omega_\alpha(\tau-s)\,d\tau\biggr)
	=C_{F,h}\bigl[\omega_\alpha(t-s)+\omega_{1+\alpha}(t-s)\bigr],
\]
and, denoting the Laplace transform by~$\mathcal{L}$,
\[
\int_0^te^{-\gamma(t-s)}\omega_\alpha(t-s)\,ds
	\le\int_0^\infty e^{-\gamma s}\omega_\alpha(s)\,ds
	=\mathcal{L}\omega_\alpha(\gamma)=\gamma^{-\alpha},
\]
so
\[
\int_0^te^{-\gamma(t-s)}|K_h(t,s)|\,ds
	\le C_{F,h}[\gamma^{-\alpha}+\gamma^{-1-\alpha}],
\]
and condition~4 follows for~$\gamma$ sufficiently large.
\end{proof}

Theorem~\ref{thm: uh existence} gives no meaningful stability result 
for~$u_h(t)$ (because $C_{F,h}$ from the proof grows rapidly
as~$h\to0$) but an energy argument yields the following estimate.  We 
use the abbreviation $\|v\|_r$ for the norm in~$H^r(\Omega)$.

\begin{theorem}\label{thm: stability Fx}
If, in addition to the assumptions of Theorem~\ref{thm: uh existence},
\begin{enumerate}
\item $F_x(x,t)\ge0$ for $0<x<L$~and $0<t<T$;
\item $1+F(x,t)^2\le C_F$ for $0<x<L$ and $0<t<T$;
\item $1/2<\alpha<1$;
\end{enumerate}
then
\begin{multline*}
\|u_h(t)-u_{0h}\|^2\le\frac{t^{1-\alpha}}{(1-\alpha)^2}
	\int_0^t\bigl(\tfrac14L^2\|g(s)\|^2+\|\gtilde(s)\|^2\bigr)\,ds\\
	+\frac{C_Ft^\alpha}{(1-\alpha)^2(2\alpha-1)}\,
	\|u_{0h}\|_1^2\quad\text{for $0<t<T$.}
\end{multline*}
\end{theorem}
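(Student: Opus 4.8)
The plan is to bound $\|u_h(t)-u_{0h}\|^2$ by first invoking Lemma~\ref{lem: ||v(t)||^2} with $v=u_h$, which gives $\|u_h(t)-u_{0h}\|^2\le\frac{t^{1-\alpha}}{1-\alpha}\int_0^t\|I^{\alpha/2}u_{ht}\|^2\,ds$ and accounts for the first of the two factors $(1-\alpha)^{-1}$ in the claimed bound. Everything then reduces to estimating $\int_0^t\|I^{\alpha/2}u_{ht}\|^2\,ds$ through an energy argument.

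For the energy step I would test \eqref{eq: spatially discrete} with $v=I^\alpha u_{ht}(t)\in\Sh$ and integrate over $(0,t)$. Writing $Z=I^\alpha u_{ht}$ and using $\partial_t^{1-\alpha}u_h=u_{0h}\omega_\alpha+I^\alpha u_{ht}$, so that $\partial_t^{1-\alpha}u_{hx}=u_{0hx}\omega_\alpha+Z_x$, the first term becomes $\int_0^t\bigiprod{I^\alpha u_{ht},u_{ht}}\,ds$, and the diffusion term splits into $\int_0^t\|Z_x\|^2\,ds\ge0$ plus a cross-term $\int_0^t\omega_\alpha\iprod{u_{0hx},Z_x}\,ds$. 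The convection term is the crux: since $Z\in\Sh\subseteq H^1_0(\Omega)$ vanishes on the boundary, integration by parts in $x$ gives $\iprod{FZ,Z_x}=-\tfrac12\iprod{F_xZ,Z}$, so $-\iprod{FZ,Z_x}=\tfrac12\iprod{F_xZ,Z}\ge0$ by the hypothesis $F_x\ge0$. Thus both quadratic contributions of $Z$ are nonnegative, and after collecting terms the energy identity reads $\int_0^t\bigiprod{I^\alpha u_{ht},u_{ht}}\,ds+\int_0^t\|Z_x\|^2\,ds+\tfrac12\int_0^t\iprod{F_xZ,Z}\,ds=\int_0^t\bigl(\iprod{g,Z}+\iprod{\gtilde,Z_x}\bigr)\,ds+\int_0^t\omega_\alpha\iprod{Fu_{0h}-u_{0hx},Z_x}\,ds$.

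I would then estimate $\int_0^t\|I^{\alpha/2}u_{ht}\|^2\,ds\le(1-\alpha)^{-1}\int_0^t\bigiprod{I^\alpha u_{ht},u_{ht}}\,ds$ using Lemma~\ref{lem: I alpha/2} together with the elementary inequality $\cos(\alpha\pi/2)\ge1-\alpha$, which holds exactly on $1/2<\alpha<1$ and supplies the second factor $(1-\alpha)^{-1}$. On the right-hand side I would bound $\iprod{g,Z}$ by Young's inequality combined with the Poincar\'e bound $\|Z\|\le\tfrac{L}{2}\|Z_x\|$ to produce $\tfrac14L^2\|g\|^2$, bound $\iprod{\gtilde,Z_x}$ to produce $\|\gtilde\|^2$, and bound the initial-data cross-term using $\int_0^t\omega_\alpha(s)^2\,ds=t^{2\alpha-1}/\bigl((2\alpha-1)\Gamma(\alpha)^2\bigr)$---finite precisely because $\alpha>1/2$---together with the pointwise estimate $(u_{0hx}-Fu_{0h})^2\le(1+F^2)(u_{0hx}^2+u_{0h}^2)\le C_F(u_{0hx}^2+u_{0h}^2)$ coming from $1+F^2\le C_F$. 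Choosing the Young weights so that the total multiple of $\int_0^t\|Z_x\|^2\,ds$ stays below one lets me absorb those terms into the nonnegative diffusion contribution and discard $\tfrac12\int_0^t\iprod{F_xZ,Z}\,ds$, yielding the factor $C_Ft^{2\alpha-1}/(2\alpha-1)\,\|u_{0h}\|_1^2$. Reinstating the Lemma~\ref{lem: ||v(t)||^2} prefactor then gives the stated estimate.

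The main obstacle is the convection term: one must recognize that $v=I^\alpha u_{ht}$ is the test function that converts the fractional operators into the clean pairing $Z,Z_x$ and, after integrating by parts in space, turns the sign condition $F_x\ge0$ into a favourable nonnegative term. A secondary, purely technical difficulty is arranging the Young weights so that the constants $\tfrac14L^2$, $1$ and $C_F$ emerge as stated while keeping the cumulative $\|Z_x\|^2$-absorption strictly less than one.
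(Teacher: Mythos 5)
Your proposal is correct and follows essentially the same argument as the paper: the paper simply sets $w_h=u_h-u_{0h}$ and collects your initial-data cross-terms into $J=\gtilde+\omega_\alpha(Fu_{0h}-(u_{0h})_x)$, but the test function $\partial_t^{1-\alpha}w_h=I^\alpha u_{ht}$ is exactly your $Z$, and the integration by parts using $F_x\ge0$, Lemmas \ref{lem: ||v(t)||^2}~and \ref{lem: I alpha/2} with $\cos(\alpha\pi/2)\ge1-\alpha$, the Poincar\'e bound giving $\tfrac14L^2\|g\|^2$, and the $\int_0^t\omega_\alpha^2\,ds\le t^{2\alpha-1}/(2\alpha-1)$ step requiring $\alpha>1/2$ all coincide with the paper's proof.
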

\begin{proof}
Using \eqref{eq: spatially discrete}, we find that the 
function~$w_h=u_h-u_{0h}$ satisfies
\[
\bigiprod{w_{ht},v}+\iprod{\partial_t^{1-\alpha}w_{hx},v_x}
	-\bigiprod{F\partial_t^{1-\alpha}w_h,v_x}
	=\iprod{g(t),v}+\iprod{J(t),v_x}
\]
for all $v\in\Sh$, where 
$J(x,t)=\gtilde(x,t)+F(x,t)\partial_t^{1-\alpha}u_{0h}(x) 
-\partial_t^{1-\alpha}(u_{0h})_x(x,t)$.  Choosing 
$v=\partial_t^{1-\alpha}w_h(t)\in\Sh$, 
\begin{multline}\label{eq: energy stab}
\bigiprod{w_{ht},\partial_t^{1-\alpha}w_h}
	+\|\partial_t^{1-\alpha}w_{hx}\|^2
	-\bigiprod{F\partial_t^{1-\alpha}w_h,
		\partial_t^{1-\alpha}w_{hx}}\\
	=\bigiprod{g(t),\partial_t^{1-\alpha}w_h}
	+\bigiprod{J(t),\partial_t^{1-\alpha}w_{hx}},
\end{multline}
and since $\partial_t^{1-\alpha}w_h(x,t)=\partial_t^{1-\alpha}u_h(x,t)
-\omega_\alpha(t)u_{0h}(x)=0$ if $x\in\{0,L\}$, integration by parts 
gives 
\begin{align*}
\bigiprod{F\partial_t^{1-\alpha}w_h,\partial_t^{1-\alpha}w_{hx}}
	=\int_0^L F\,\tfrac12\bigl(
	\bigl(\partial_t^{1-\alpha}w_h\bigr)^2\bigr)_x\,dx
	=-\int_0^LF_x\,\tfrac12\bigl(
		\partial_t^{1-\alpha}w_h\bigr)^2\,dx.
\end{align*}
Hence, by assumption~1,
\begin{equation}\label{eq: wht}
\bigiprod{w_{ht},\partial_t^{1-\alpha}w_h}
	+\|\partial_t^{1-\alpha}w_{hx}\|^2
	\le\bigiprod{g(t),\partial_t^{1-\alpha}w_h}
	+\bigiprod{J(t),\partial_t^{1-\alpha}w_{hx}},
\end{equation}
and the Poincar\'e inequality,
$\|v\|^2\le\tfrac12 L^2\|v_x\|^2$ for $v\in H^1_0(\Omega)$, implies 
that
\[
\bigiprod{g(t),\partial_t^{1-\alpha}w_h}
	\le\frac{L^2}{4}\|g(t)\|^2
	+\frac{1}{2}\|\partial_t^{1-\alpha}w_{hx}\|^2.
\]
Using 
$\iprod{J(t),\partial_t^{1-\alpha}w_{hx}}
\le\tfrac12\|J(t)\|^2+\tfrac12\|\partial_t^{1-\alpha}w_{hx}\|^2$,
and noting that $\partial^{1-\alpha}_tw_h=I^\alpha w_{ht}$ 
because $w_h(0)=0$, it follows from~\eqref{eq: wht} that
\[
\bigiprod{w_{ht},I^\alpha w_{ht}}=
\bigiprod{w_{ht},\partial_t^{1-\alpha}w_h}
	\le\frac{L^2}{4}\|g(t)\|^2+\frac{1}{2}\|J(t)\|^2.
\]
By Lemmas \ref{lem: ||v(t)||^2}~and \ref{lem: I alpha/2}, and 
using the inequality~$\cos(\alpha\pi/2)\ge1-\alpha$, 
\begin{align*}
\|w_h(T)\|^2&\le\frac{T^{1-\alpha}}{(1-\alpha)^2}\int_0^T
	\bigiprod{I^\alpha w_{ht},w_{ht}}\,dt\\
	&\le\frac{T^{1-\alpha}}{(1-\alpha)^2}\biggl(
	\frac{L^2}{4}\int_0^T\|g(t)\|^2\,dt
	+\frac12\int_0^T\|J(t)\|^2\,dt\biggr),
\end{align*}
and since 
$(\partial_t^{1-\alpha}u_{0h})(x,t)=\omega_\alpha(t)u_{0h}(x)$, the 
Cauchy--Schwarz inequality gives
\[
\|J(t)\|\le\|\gtilde\|+\sqrt{C_F}\,\omega_\alpha(t)\|u_{0h}\|_1.
\]
Hence, by assumption~2,
\begin{equation}\label{eq: int ||J||^2}
\frac12\int_0^T\|J(t)\|^2\,dt
	\le\int_0^T\|\gtilde(t)\|^2\,dt
	+C_F\|u_{0h}\|^2_1
		\int_0^T\omega_\alpha(t)^2\,dt,
\end{equation}
and assumption~3 means that 
$\int_0^T \omega_\alpha(t)^2\,dt\le\int_0^T t^{2\alpha-2}\,dt
=T^{2\alpha-1}/(2\alpha-1)$, implying the desired estimate.
\end{proof}

For applications, the condition~$F_x\ge0$ seems unnaturally 
restrictive.  In the next result, we show that it is not necessary 
for stability, but the resulting bound grows more rapidly with~$t$,
owing to the use of the weakly singular Gronwall inequality.

\begin{theorem}\label{thm: stability E}
If we drop assumption~1 from the hypotheses of
Theorem~\ref{thm: stability Fx}, then
\begin{multline*}
\|u_h(t)-u_{0h}\|^2\le
\frac{E_{\alpha/2}\bigl(\tfrac58C_Ft^\alpha/(1-\alpha)\bigr)}%
{(1-\alpha)^2}
	\biggl(t^{1-\alpha}\int_0^t\bigl(\tfrac12\|g(s)\|^2
	+\|\gtilde(s)\|^2\bigr)\,ds\\
	+\frac{C_Ft^\alpha}{2\alpha-1}\,\|u_{0h}\|_1^2
	\biggr)\quad\text{for $0\le t\le T$.}
\end{multline*}
\end{theorem}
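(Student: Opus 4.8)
The plan is to follow the energy argument of Theorem~\ref{thm: stability Fx} verbatim up through the identity~\eqref{eq: energy stab}, since the derivation of~\eqref{eq: energy stab} never invokes assumption~1. The difference begins with the term $\bigiprod{F\partial_t^{1-\alpha}w_h,\partial_t^{1-\alpha}w_{hx}}$: having dropped $F_x\ge0$, I cannot integrate by parts and discard it, so instead I would bound it directly. By Cauchy--Schwarz and assumption~2 in the form $\|F\partial_t^{1-\alpha}w_h\|^2\le(C_F-1)\|\partial_t^{1-\alpha}w_h\|^2$, followed by Young's inequality,
\[
\bigiprod{F\partial_t^{1-\alpha}w_h,\partial_t^{1-\alpha}w_{hx}}
\le\tfrac{C_F-1}{2}\|\partial_t^{1-\alpha}w_h\|^2
+\tfrac12\|\partial_t^{1-\alpha}w_{hx}\|^2.
\]
The remaining two terms on the right of~\eqref{eq: energy stab} I would now bound by the plain Young inequality (not Poincar\'e, which explains the coefficient $\tfrac12$ rather than $\tfrac14L^2$ on $\|g\|^2$), namely $\bigiprod{g,\partial_t^{1-\alpha}w_h}\le\tfrac12\|g\|^2+\tfrac12\|\partial_t^{1-\alpha}w_h\|^2$ and $\bigiprod{J,\partial_t^{1-\alpha}w_{hx}}\le\tfrac12\|J\|^2+\tfrac12\|\partial_t^{1-\alpha}w_{hx}\|^2$. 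The two contributions $\tfrac12\|\partial_t^{1-\alpha}w_{hx}\|^2$ then exactly cancel the gradient term on the left, the coefficients of $\|\partial_t^{1-\alpha}w_h\|^2$ combine to $\tfrac{C_F-1}{2}+\tfrac12=\tfrac{C_F}{2}$, and (using $\partial_t^{1-\alpha}w_h=I^\alpha w_{ht}$, valid since $w_h(0)=0$) I arrive at the pointwise bound
\[
\bigiprod{w_{ht},I^\alpha w_{ht}}
\le\tfrac{C_F}{2}\|I^\alpha w_{ht}\|^2+\tfrac12\|g\|^2+\tfrac12\|J\|^2.
\]

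The hard part, and the reason the Mittag--Leffler function appears, is to control the term $\int_0^t\|I^\alpha w_{hs}\|^2\,ds$ produced above, which cannot be absorbed on the left without an unavailable smallness condition on $C_FL^2$. Setting $\Psi(t)=\int_0^t\bigiprod{I^\alpha w_{hs},w_{hs}}\,ds$ and integrating the last display over $(0,t)$ gives $\Psi(t)\le\tfrac{C_F}{2}\int_0^t\|I^\alpha w_{hs}\|^2\,ds+A(t)$, where $A(t)=\tfrac12\int_0^t\|g\|^2\,ds+\tfrac12\int_0^t\|J\|^2\,ds$. The key trick is to write $I^\alpha=I^{\alpha/2}I^{\alpha/2}$ and apply Lemma~\ref{lem: int I alpha} with $\alpha$ replaced by $\alpha/2$ to the function $I^{\alpha/2}w_{hs}$, then Lemma~\ref{lem: I alpha/2}, to obtain
\[
\int_0^t\|I^\alpha w_{hs}\|^2\,ds
\le\frac{\omega_{\alpha/2+1}(t)}{\cos(\alpha\pi/2)}
\int_0^t\omega_{\alpha/2}(t-\tau)\Psi(\tau)\,d\tau.
\]
This is precisely the convolution structure required by Lemma~\ref{lem: Gronwall} with $\beta=\alpha/2$, non-decreasing $a=A$, and $b(t)=\tfrac{C_F}{2}\,\omega_{\alpha/2+1}(t)/\cos(\alpha\pi/2)$, so the lemma yields $\Psi(t)\le A(t)E_{\alpha/2}\bigl(b(t)t^{\alpha/2}\bigr)$.

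It then remains to tidy the argument of $E_{\alpha/2}$ and reassemble constants. Since $b(t)t^{\alpha/2}=C_Ft^\alpha/\bigl(2\Gamma(1+\alpha/2)\cos(\alpha\pi/2)\bigr)$, using $\cos(\alpha\pi/2)\ge1-\alpha$ together with $\Gamma(1+\alpha/2)\ge4/5$ for $\alpha\in(1/2,1)$ bounds it by $\tfrac58C_Ft^\alpha/(1-\alpha)$, and monotonicity of $E_{\alpha/2}$ gives the stated factor. I would recover $\|w_h(t)\|^2$ from $\Psi(t)$ exactly as in Theorem~\ref{thm: stability Fx}, via Lemma~\ref{lem: ||v(t)||^2}, Lemma~\ref{lem: I alpha/2} and $\cos(\alpha\pi/2)\ge1-\alpha$, which give $\|w_h(t)\|^2\le t^{1-\alpha}(1-\alpha)^{-2}\Psi(t)$, and estimate $A(t)$ using the same bounds $\|J(s)\|\le\|\gtilde(s)\|+\sqrt{C_F}\,\omega_\alpha(s)\|u_{0h}\|_1$ and $\int_0^t\omega_\alpha^2\le t^{2\alpha-1}/(2\alpha-1)$ as before. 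Collecting terms, and noting $t^{1-\alpha}\cdot t^{2\alpha-1}=t^\alpha$ in the initial-data contribution, reproduces exactly the claimed bound. The only genuinely delicate point is the bookkeeping of constants, in particular verifying $\Gamma(1+\alpha/2)\ge4/5$ on $(1/2,1)$ so that the coefficient $\tfrac58$ is correct, and checking that the gradient terms cancel precisely rather than leaving a residual.
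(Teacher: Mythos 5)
Your proposal is correct and follows essentially the same route as the paper: the same energy inequality derived from~\eqref{eq: energy stab}, the splitting $I^\alpha=I^{\alpha/2}I^{\alpha/2}$ with Lemmas \ref{lem: int I alpha}~and \ref{lem: I alpha/2}, the Gronwall inequality of Lemma~\ref{lem: Gronwall} with $\beta=\alpha/2$, and the same constant bookkeeping. The only (cosmetic) difference is that you run the Gronwall argument on $\Psi(t)=\int_0^t\iprod{I^\alpha w_{hs},w_{hs}}\,ds$ while the paper uses $y_h(t)=\int_0^t\|I^{\alpha/2}w_{hs}\|^2\,ds$; the two are equivalent via Lemma~\ref{lem: I alpha/2} and give the same final bound.
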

\begin{proof}
Recall that $\partial_t^{1-\alpha}w_h=I^\alpha w_{ht}$ because 
$w_h(0)=0$, so \eqref{eq: energy stab} implies that
\[
\bigiprod{I^\alpha w_{ht},w_{ht}}
	\le\tfrac12\|FI^\alpha w_{ht}\|^2
	+\tfrac12\|g(t)\|^2+\tfrac12\|I^\alpha w_{ht}\|^2
	+\tfrac12\|J(t)\|^2.
\]
By Lemma~\ref{lem: I alpha/2},
\[
y_h(T)\equiv\smash[b]{\int_0^T\bigl\|I^{\alpha/2}w_{ht}\bigr\|^2\,dt
	\le\frac{1}{1-\alpha}\int_0^T
	\bigiprod{I^\alpha w_{ht},w_{ht}}\,dt},
\]
so if we let
\[
a(T)=\smash{\frac{1}{2(1-\alpha)}\int_0^T
	\bigl(\|g(t)\|^2+\|J(t)\|^2\bigr)\,dt}
\]
then
\[
y_h(T)\le\smash[t]{a(T)+\frac{C_F}{2(1-\alpha)}\int_0^T
	\|I^\alpha w_{ht}\|^2\,dt}.
\]
Since $I^\alpha w_{ht}=I^{\alpha/2}\bigl(I^{\alpha/2}w_{ht}\bigr)$,
Lemma~\ref{lem: int I alpha} implies that
\[
y_h(T)\le a(T)+b(T)\int_0^T\omega_{\alpha/2}(T-t)y_h(t)\,dt
\quad\text{where}\quad
b(T)=\frac{C_F\omega_{\alpha/2+1}(T)}{2(1-\alpha)}.
\]
Hence, using Lemma~\ref{lem: ||v(t)||^2}~and the Gronwall 
inequality of Lemma~\ref{lem: Gronwall},
\[
\|w_h(t)\|^2\le\frac{t^{1-\alpha}}{1-\alpha}\,y_h(t)
	\le\frac{t^{1-\alpha}}{1-\alpha}\, 
		a(t)E_{\alpha/2}\bigl(b(t)t^{\alpha/2}\bigr),
\]
and the result follows after using \eqref{eq: int ||J||^2} to 
estimate $a(t)$, because the lower bound
$\Gamma(\alpha/2+1)\ge4/5$ for~$1/2<\alpha<1$ implies
$b(t)\le\tfrac58C_Ft^{\alpha/2}/(1-\alpha)$.  Note that 
\eqref{eq: int ||J||^2} does not rely on the first 
assumption~$F_x\ge0$ of Theorem~\ref{thm: stability Fx}.
\end{proof}

To estimate the error in the finite element solution, we will compare 
$u_h(t)$ to the Ritz projection of~$u(t)$. Recall that 
$R_h:H^1_0(\Omega)\to\Sh$ is defined by
\[
\bigiprod{(R_h w)_x,v_x}=\iprod{w_x,v_x}\quad\text{for all $v\in\Sh$,}
\]
and satisfies
\begin{equation}\label{eq: Rh error}
\|v-R_hv\|\le Ch^r|v|_r
\quad\text{and}\quad
\|(v-R_hv)_x\|\le Ch^{r-1}|v|_r
\end{equation}
for $r\in\{1,2\}$ (in our piecewise-linear case).  Here, 
$|v|_r=\|v^{(r)}\|$ is the usual $H^r$-seminorm.
The next theorem shows that if we choose $u_{0h}=R_hu_0$, 
and if $u\in H^1\bigl((0,T),H^r(\Omega)\bigr)$, then 
$\|u_h(t)-u(t)\|=O(h^r)$ for~$0\le t\le T$ and $r\in\{1,2\}$. 

\begin{theorem}\label{thm: uh error}
Let $u_h$ denote the the spatially-discrete finite element solution 
of~\eqref{eq: ibvp}, defined by~\eqref{eq: spatially discrete} with 
$\gtilde(t)\equiv0$.  Then, under the hypotheses of 
Theorem~\ref{thm: stability E}, we have the error bound
\[
\|u_h(t)-u(t)\|^2\le C\|u_{0h}-R_hu_0\|_1^2
	+Ch^{2r}\biggl(\|u_0\|_r^2+\int_0^t\|u_t(s)\|_r^2\,ds\biggr)
\]
for $0\le t\le T$ and $r\in\{1,2\}$, where $C$ depends on $\alpha$, 
$F$, $T$~and $L$.
\end{theorem}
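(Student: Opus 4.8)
The plan is to use the standard finite element splitting based on the Ritz projection. I write the error as $u_h-u=\theta+\rho$, where $\theta=u_h-R_hu\in\Sh$ and $\rho=R_hu-u$. The projection error $\rho$ is handled immediately by the estimates~\eqref{eq: Rh error}: since $\rho_t=(R_h-I)u_t$, we have $\|\rho(t)\|\le Ch^r|u(t)|_r$ and $\|\rho_t(s)\|\le Ch^r|u_t(s)|_r$, and writing $u(t)=u_0+\int_0^tu_t(s)\,ds$ lets me bound $|u(t)|_r^2\le2|u_0|_r^2+2t\int_0^t|u_t(s)|_r^2\,ds$. All the substantive work therefore goes into estimating $\theta$.

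To find the equation satisfied by $\theta$, I subtract the weak form~\eqref{eq: ibvp weak} (tested against $v\in\Sh\subseteq H^1_0(\Omega)$) from~\eqref{eq: spatially discrete} with $\gtilde\equiv0$, obtaining a Galerkin orthogonality relation for $u_h-u=\theta+\rho$. The defining property of the Ritz projection gives $\iprod{\rho_x,v_x}=0$ for all $v\in\Sh$, and since $\partial_t^{1-\alpha}$ acts only in time it follows that $\iprod{\partial_t^{1-\alpha}\rho_x,v_x}=\partial_t^{1-\alpha}\iprod{\rho_x,v_x}=0$. Hence the diffusion contribution of $\rho$ drops out and $\theta$ satisfies
\[
\iprod{\theta_t,v}+\iprod{\partial_t^{1-\alpha}\theta_x,v_x}-\iprod{F\partial_t^{1-\alpha}\theta,v_x}=\iprod{-\rho_t,v}+\iprod{F\partial_t^{1-\alpha}\rho,v_x}
\]
for all $v\in\Sh$. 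This is precisely~\eqref{eq: spatially discrete} with the source terms $g$ and $\gtilde$ replaced by $-\rho_t$ and $F\partial_t^{1-\alpha}\rho$, and with initial value $\theta(0)=u_{0h}-R_hu_0$. I may therefore apply the stability estimate of Theorem~\ref{thm: stability E} directly to $\theta$.

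The stability bound then reduces everything to estimating $\int_0^t\|\rho_t(s)\|^2\,ds$ and $\int_0^t\|F\partial_t^{1-\alpha}\rho(s)\|^2\,ds$, together with $\|\theta(0)\|_1$. The first integral is immediately $\le Ch^{2r}\int_0^t\|u_t(s)\|_r^2\,ds$ by the projection estimate. For the second, I use the bound $|F|\le\sqrt{C_F}$ (from assumption~2) and then Lemma~\ref{lem: partial v v_t} applied to $v=\rho$, which yields
\[
\int_0^t\|\partial_t^{1-\alpha}\rho(s)\|^2\,ds\le C\bigl(\|\rho(0)\|^2+\int_0^t\|\rho_t(s)\|^2\,ds\bigr)\le Ch^{2r}\bigl(\|u_0\|_r^2+\int_0^t\|u_t(s)\|_r^2\,ds\bigr),
\]
with $C$ depending on $\alpha$ and $T$; this is the step that requires $1/2<\alpha<1$, inherited from Lemma~\ref{lem: partial v v_t}. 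Since the Mittag--Leffler prefactor in Theorem~\ref{thm: stability E} is bounded on $[0,T]$, I obtain $\|\theta(t)-\theta(0)\|^2\le Ch^{2r}(\|u_0\|_r^2+\int_0^t\|u_t\|_r^2\,ds)+C\|\theta(0)\|_1^2$, and the triangle inequality together with the bound on $\rho$ completes the estimate for $\|u_h(t)-u(t)\|^2$. The main obstacle is organizational rather than deep: identifying $F\partial_t^{1-\alpha}\rho$ as the correct effective $\gtilde$ so that the whole error can be routed through Theorem~\ref{thm: stability E}, and controlling the fractional derivative of the projection error, for which Lemma~\ref{lem: partial v v_t} is exactly the tool and is the reason the hypothesis $\alpha>1/2$ persists.
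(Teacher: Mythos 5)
Your proposal is correct and follows essentially the same route as the paper: the splitting $u_h-u=\theta+\rho$ via the Ritz projection, the observation that $\theta$ satisfies~\eqref{eq: spatially discrete} with $-\rho_t$ and $F\partial_t^{1-\alpha}\rho$ in the roles of $g$ and $\gtilde$, the application of Theorem~\ref{thm: stability E}, and the use of Lemma~\ref{lem: partial v v_t} (the source of the restriction $\alpha>1/2$) together with~\eqref{eq: Rh error} to finish. The only difference is presentational: you invoke Ritz orthogonality directly to cancel the diffusion term, whereas the paper writes the same cancellation as $\iprod{\partial_t^{1-\alpha}(R_hu)_x,v_x}=\iprod{\partial_t^{1-\alpha}u_x,v_x}$.
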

\begin{proof}
We decompose the error into two terms,
\[
u_h-u=\theta+\rho\quad\text{where}\quad
\theta=u_h-R_hu\quad\text{and}\quad
\rho=R_hu-u,
\]
and deduce from~\eqref{eq: spatially discrete} that, for~$v\in\Sh$,
\begin{multline*}
\iprod{\theta_t,v}+\bigiprod{\partial_t^{1-\alpha}\theta_x,v_x}
	-\bigiprod{F\partial_t^{1-\alpha}\theta,v_x}
	=\iprod{g(t),v}\\
	-\iprod{R_hu_t,v}
	-\bigiprod{\partial_t^{1-\alpha}(R_hu)_x,v_x}
	+\bigiprod{F\partial_t^{1-\alpha}R_hu,v_x}.
\end{multline*}
Since $\bigiprod{\partial_t^{1-\alpha}(R_hu)_x,v_x}
=\bigiprod{(R_h\partial_t^{1-\alpha}u)_x,v_x}
=\bigiprod{\partial_t^{1-\alpha}u_x,v_x}$, it follows 
from~\eqref{eq: ibvp weak} that $\theta:[0,T]\to\Sh$ satisfies
\[
\iprod{\theta_t,v}+\bigiprod{\partial_t^{1-\alpha}\theta_x,v_x}
	-\bigiprod{F\partial_t^{1-\alpha}\theta,v_x}
	=\bigiprod{F\partial_t^{1-\alpha}\rho,v_x}-\iprod{\rho_t,v},
\]
which has the same form as~\eqref{eq: spatially discrete}, with 
$\theta$, $-\rho_t$~and $F\partial_t^{1-\alpha}\rho$ playing the 
roles of $u_h$, $g(t)$~and $\gtilde(t)$, respectively.  Hence, 
Theorem~\ref{thm: stability E} gives
\[
\|\theta(T)-\theta(0)\|^2\le 
	C\|\theta(0)\|_1^2
	+C\int_0^T\bigl(\|\rho_t\|^2
		+\|\partial_t^{1-\alpha}\rho\|^2\bigr)\,dt,
\]
and by Lemma~\ref{lem: partial v v_t},
$\int_0^T\|\partial_t^{1-\alpha}\rho\|^2\,dt
	\le C\|\rho(0)\|^2+C\int_0^T\|\rho_t\|^2\,dt$.
The desired error bound follows after applying~\eqref{eq: Rh error} 
with~$v=u_t$.
\end{proof}
\section{An implicit time-stepping scheme}\label{sec: time}
To discretize in time, we suppose $0=t_0<t_1<t_2<\cdots<t_N=T$ and 
denote by~$k_n=t_n-t_{n-1}$ the length of the $n$th 
subinterval~$I_n=(t_{n-1},t_n)$, for $1\le n\le N$.  The maximum time 
step is denoted by~$k=\max_{1\le n\le N}k_n$. With any sequence of 
values 
$v^1$, $v^2$, \dots, $v^N$ we associate the piecewise-constant 
functions $\check v$~and $\hat v$ defined by
\begin{equation}\label{eq: hat check}
\check v(t)=v^n\quad\text{and}\quad
\hat v(t)=v^{n-1}\quad\text{for $t_{n-1}<t<t_n$.}
\end{equation}
Integrating the fractional Fokker--Planck equation~\eqref{eq: ibvp} 
over the $n$th time interval~$I_n$ gives
\begin{equation}\label{eq: integrated FP}
u(t_n)-u(t_{n-1})-\int_{I_n}\partial_t^{1-\alpha}u_{xx}\,dt
	+\int_{I_n}\bigl(F\partial_t^{1-\alpha}u\bigr)_x\,dt
	=\int_{I_n}g(t)\,dt.
\end{equation}
We seek to compute $U^n(x)\approx u(x,t_n)$ for $n=1$, $2$, \dots, 
$N$ by requiring that
\begin{equation}\label{eq: time stepping}
U^n-U^{n-1}-\int_{I_n}\partial_t^{1-\alpha}\check U_{xx}\,dt
	+\int_{I_n}\bigl(F^n\partial_t^{1-\alpha}\check U\bigr)_x\,dt
	=k_n\bar g^n,
\end{equation}
with $F^n(x)=F(x,t_n)$~and 
$\bar g^n\approx k_n^{-1}\int_{I_n}g(t)\,dt$.
The time stepping starts from the initial condition
\begin{equation}\label{eq: U0}
U^0(x)=u_0(x)\quad\text{for $0\le x\le L$,}
\end{equation}
and is subject to the boundary conditions
$U^n(0)=0=U^n(L)$ for $1\le n\le N$.

Since
\[
I^\alpha\check v(t_n)
	=\sum_{j=1}^n\int_{I_j}\omega_\alpha(t_n-s)v^j\,ds
	=\sum_{j=1}^n\omega_{nj}v^j
\]
where
\[
\omega_{nj}=\int_{I_j}\omega_\alpha(t_n-s)\,ds
	=\omega_{1+\alpha}(t_n-t_{j-1})-\omega_{1+\alpha}(t_n-t_j)
	\quad\text{for $n\ge2$,}
\]
with $\omega_{11}=\omega_{1+\alpha}(t_1)$, we see that
\[
\int_{I_n}\partial_t^{1-\alpha}\check v\,dt
	=(I^\alpha\check v)(t_n)-(I^\alpha\check v)(t_{n-1})
	=\sum_{j=1}^n\omega_{nj}v^j-\sum_{j=1}^{n-1}\omega_{n-1,j}v^j.
\]
Hence, to find $U^n$ satisfying~\eqref{eq: time stepping} we 
solve 
\[
U^n-\omega_{nn}U^n_{xx}+\omega_{nn}(F^nU^n)_x=U^{n-1}+k_n\bar g^n
	+\sum_{j=1}^{n-1}(\omega_{nj}-\omega_{n-1,j})
	\bigl(U^j_{xx}-(F^nU^j)_x\bigr).
\]
It follows from Theorem~\ref{thm: time stepping stability} below that 
this linear elliptic boundary-value problem has a unique 
solution~$U^n\in H^1_0(\Omega)$ if $k$ is sufficiently small.
Note that if the mesh is uniform, that is, if $k=k_n$ for all~$n$, 
then the sums are discrete convolutions because
\begin{equation}\label{eq: omega a_n}
\omega_{nj}=\frac{k^\alpha a_{n-j}}{\Gamma(1+\alpha)}
	=\omega_{\alpha+1}(k)a_{n-j}
	\quad\text{where}\quad a_n=(n+1)^\alpha-n^\alpha.
\end{equation}
The next two lemmas, which will help prove a stability estimate 
for~$U^n$, use the following notation for the backward difference,
\[
\partial v(t)=\partial v^n=\frac{v^n-v^{n-1}}{k_n}
\quad\text{for $t\in I_n$.}
\]

\begin{lemma}\label{lem: sum I alpha}
For any sequence~$(v^n)_{n=0}^N$ in~$L_2(\Omega)$,
\[
\sum_{n=1}^N k_n\|(I^\alpha\partial v)(t_n)\|^2
	\le 2\omega_{\alpha+1}(T)\sum_{n=1}^N\omega_{Nn}
	\sum_{j=1}^nk_j\|\partial v^j\|^2
	+2\sum_{n=1}^N k_n^{2\alpha+1}\|\partial v^n\|^2.
\]
\end{lemma}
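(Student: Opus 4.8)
The plan is to imitate the continuous estimate of Lemma~\ref{lem: int I alpha}, but to peel off the diagonal index $j=n$ \emph{before} invoking any Cauchy--Schwarz inequality. The reason is that this term carries the singular weight $\omega_{nn}=\omega_{\alpha+1}(k_n)=k_n^\alpha/\Gamma(1+\alpha)$, which is too large to absorb into a global prefactor; keeping it inside the main sum would corrupt the power of $k_n$.

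Concretely, I would write $(I^\alpha\partial v)(t_n)=\sum_{j=1}^n\omega_{nj}\partial v^j$ and split off the last term,
\[
\|(I^\alpha\partial v)(t_n)\|^2
\le 2\Bigl\|\sum_{j=1}^{n-1}\omega_{nj}\partial v^j\Bigr\|^2
+2\omega_{nn}^2\|\partial v^n\|^2,
\]
via $\|a+b\|^2\le2\|a\|^2+2\|b\|^2$; this is the source of the two factors of $2$ in the statement. To the off-diagonal part I apply the Hilbert-space Cauchy--Schwarz inequality $\bigl\|\sum_j c_jx_j\bigr\|^2\le(\sum_j c_j)(\sum_j c_j\|x_j\|^2)$ with $c_j=\omega_{nj}\ge0$, together with $\sum_{j=1}^{n-1}\omega_{nj}\le\sum_{j=1}^n\omega_{nj}=\omega_{\alpha+1}(t_n)\le\omega_{\alpha+1}(T)$, to obtain
\[
\Bigl\|\sum_{j=1}^{n-1}\omega_{nj}\partial v^j\Bigr\|^2
\le\omega_{\alpha+1}(T)\sum_{j=1}^{n-1}\omega_{nj}\|\partial v^j\|^2.
\]

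I then multiply by $k_n$, sum over $n$, and reverse the order of summation, mirroring the "reverse the order of integration" device in Lemma~\ref{lem: int I alpha}. The off-diagonal double sum becomes $\sum_{j=1}^N\|\partial v^j\|^2\sum_{n=j+1}^N k_n\omega_{nj}$, and I control the inner sum by two monotonicity facts: since $\omega_\alpha$ is decreasing, $\omega_{nj}=\int_{I_j}\omega_\alpha(t_n-s)\,ds\le k_j\,\omega_\alpha(t_n-t_j)$ for $n>j$; and since a right-endpoint rectangle lies below the graph of a decreasing integrand, $\sum_{n=j+1}^N k_n\omega_\alpha(t_n-t_j)\le\int_{t_j}^{T}\omega_\alpha(t-t_j)\,dt=\omega_{\alpha+1}(T-t_j)\le\omega_{\alpha+1}(T-t_{j-1})$. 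Because $\omega_{\alpha+1}(T-t_{j-1})=\sum_{n=j}^N\omega_{Nn}$, a final reversal of summation turns $\sum_j k_j\|\partial v^j\|^2\,\omega_{\alpha+1}(T-t_{j-1})$ into $\sum_{n=1}^N\omega_{Nn}\sum_{j=1}^n k_j\|\partial v^j\|^2$, which is exactly the first term on the right. The diagonal contribution $2\sum_n k_n\omega_{nn}^2\|\partial v^n\|^2$ reduces, using $\omega_{nn}^2=k_n^{2\alpha}/\Gamma(1+\alpha)^2$, to a fixed multiple of $\sum_n k_n^{2\alpha+1}\|\partial v^n\|^2$, giving the second term.

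The main obstacle is exactly the handling of the singular weight $\omega_{nn}=O(k_n^\alpha)$. If one instead applies Cauchy--Schwarz to the full sum $\sum_{j=1}^n$, the $j=n$ term gets multiplied by the global factor $\omega_{\alpha+1}(T)$ and contributes at order $k_n^{\alpha+1}$ rather than $k_n^{2\alpha+1}$, which is too large as $k_n\to0$. Separating this term at the outset is what both forces the $\|a+b\|^2\le2\|a\|^2+2\|b\|^2$ splitting and isolates the singular term; once that separation is in place, the remaining steps---Cauchy--Schwarz, reversal of the order of summation, and the monotone-quadrature bound---are routine.
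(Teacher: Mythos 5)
Your argument is essentially correct, but it takes a genuinely different, fully discrete route from the paper. The paper never expands $(I^\alpha\partial v)(t_n)$ in the weights $\omega_{nj}$: instead, for $t\in I_n$ it bounds $\|(I^\alpha\partial v)(t_n)\|\le (I^\alpha\|\partial v\|)(t)+\omega_{\alpha+1}(t_n-t)\|\partial v^n\|$ (using the monotonicity of $\omega_\alpha$), squares, integrates over $I_n$, sums over $n$, and then invokes the continuous Lemma~\ref{lem: int I alpha} for $\int_0^T(I^\alpha\|\partial v\|)^2\,dt$ before discretizing the resulting double integral. Both proofs rest on the same key idea you identify --- isolating the most recent, singular contribution so that it enters at order $k_n^{2\alpha+1}$ rather than $k_n^{\alpha+1}$ --- and your weighted Cauchy--Schwarz, summation-by-reordering and monotone-quadrature steps are all valid, reproducing the first term on the right exactly. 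The one discrepancy is in the diagonal term: your bound is $2k_n\omega_{nn}^2\|\partial v^n\|^2=2k_n^{2\alpha+1}\|\partial v^n\|^2/\Gamma(1+\alpha)^2$, and since $\Gamma(1+\alpha)<1$ for $0<\alpha<1$ this constant exceeds $2$ (it can be as large as $8/\pi\approx2.55$), so you do not quite recover the stated constant. The paper gets exactly $2$ because integrating $\omega_{\alpha+1}(t_n-t)^2$ over $I_n$ supplies the extra factor $1/(2\alpha+1)$, and $(2\alpha+1)\Gamma(\alpha+1)^2\ge1$; your treatment replaces that integral by the single weight $\omega_{nn}$ and so forfeits this gain. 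The difference is immaterial for the lemma's use in Theorem~\ref{thm: time stepping stability} (it only inflates the generic constants), but as written your proof establishes the inequality with $2/\Gamma(1+\alpha)^2$ in place of $2$ in the second term, and you should either state it that way or mimic the paper's integration over $I_n$ to recover the sharper constant.
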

\begin{proof}
For $t\in I_n$,
\begin{align*}
\|(I^\alpha\partial v)(t_n)\|
	&\le\int_0^t\omega_\alpha(t-s)\|\partial v(s)\|\,ds
	+\int_t^{t_n}\omega_\alpha(t_n-s)\|\partial v(s)\|\,ds\\
	&=(I^\alpha\|\partial v\|)(t)
	+\omega_{\alpha+1}(t_n-t)\|\partial v^n\|,
\end{align*}
where we used the fact that 
$\omega_\alpha(t_n-s)\le\omega_\alpha(t-s)$ because $t\le t_n$.  
Thus, after squaring and integrating over~$t\in I_n$, we obtain
\[
k_n\|(I^\alpha\partial v)(t_n)\|^2
	=\int_{I_n}\|(I^\alpha\partial v)(t_n)\|^2\,dt
	\le 2\int_{I_n}(I^\alpha\|\partial v\|)^2\,dt
	+2k_n^{2\alpha+1}\|\partial v^n\|^2,
\]
since $(2\alpha+1)\Gamma(\alpha+1)^2\ge1$. By 
Lemma~\ref{lem: int I alpha},
\begin{multline*}
\sum_{n=1}^N k_n\|(I^\alpha\partial v)(t_n)\|^2
	\le 2\omega_{\alpha+1}(T)\int_0^T\omega_\alpha(T-t)
	\int_0^t\|\partial v(s)\|^2\,ds\,dt\\
	+2\sum_{n=1}^N k_n^{2\alpha+1}\|\partial v^n\|^2,
\end{multline*}
and the result follows because 
$\int_{I_j}\|\partial v(s)\|^2\,ds=k_j\|\partial v^j\|^2$.
\end{proof}

\begin{lemma}\label{lem: check partial}
For uniform time steps~$k_n=k$ and
for any sequence~$(v_n)_{n=0}^N$, 
\[
\int_{I_n}\partial_t^{1-\alpha}\check v\,dt
	=k(I^\alpha\partial v)(t_n)+\omega_{n1}v^0
\]
and
\[
\sum_{n=1}^N\iprod{v^n,(I^\alpha\check v)(t_n)}
	\ge\tfrac12\omega_{1+\alpha}(k)\sum_{n=1}^N\|v^n\|^2.
\]
\end{lemma}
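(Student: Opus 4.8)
The plan is to handle the two assertions separately: the first is a purely algebraic identity, while the second reduces to the positive semidefiniteness of a Toeplitz form built from the weights $a_n$.

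For the identity, I would begin from the relation already recorded above the lemma,
\[
\int_{I_n}\partial_t^{1-\alpha}\check v\,dt
  =\sum_{j=1}^n\omega_{nj}v^j-\sum_{j=1}^{n-1}\omega_{n-1,j}v^j.
\]
Since $\partial v$ is the piecewise-constant function equal to $\partial v^j=(v^j-v^{j-1})/k$ on $I_j$, the formula $I^\alpha\check w(t_n)=\sum_{j=1}^n\omega_{nj}w^j$ applied to $w^j=\partial v^j$ gives $k(I^\alpha\partial v)(t_n)=\sum_{j=1}^n\omega_{nj}(v^j-v^{j-1})$. I would then split this into two sums, shift the index in the part containing $v^{j-1}$, and invoke the uniform-mesh identity $\omega_{n,j+1}=\omega_{n-1,j}$, which is immediate from $\omega_{nj}=\omega_{\alpha+1}(k)a_{n-j}$ in~\eqref{eq: omega a_n}. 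The only term not absorbed into the shifted sum is $\omega_{n1}v^0$, so comparing with the display above yields $\int_{I_n}\partial_t^{1-\alpha}\check v\,dt=k(I^\alpha\partial v)(t_n)+\omega_{n1}v^0$. This part is routine bookkeeping.

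For the inequality, I would expand $(I^\alpha\check v)(t_n)=\sum_{j=1}^n\omega_{nj}v^j$ and use the uniform-mesh relation to obtain
\[
\sum_{n=1}^N\iprod{v^n,(I^\alpha\check v)(t_n)}
  =\omega_{\alpha+1}(k)\sum_{n=1}^N\sum_{j=1}^n a_{n-j}\iprod{v^n,v^j}.
\]
The crucial manipulation is to recognize the one-sided double sum as half a full Toeplitz form plus a diagonal remainder: because $a_0=1$ and the inner product is symmetric,
\[
\sum_{n=1}^N\sum_{j=1}^n a_{n-j}\iprod{v^n,v^j}
  =\tfrac12\sum_{n=1}^N\|v^n\|^2
  +\tfrac12\sum_{n,j=1}^N a_{|n-j|}\iprod{v^n,v^j}.
\]
Thus the claimed lower bound $\tfrac12\omega_{\alpha+1}(k)\sum_{n=1}^N\|v^n\|^2$ is exactly equivalent to nonnegativity of the symmetric Toeplitz form with entries $a_{|n-j|}$. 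I would supply this by appealing to Lemma~\ref{lem: a_n positive}: the sequence $a_n=(n+1)^\alpha-n^\alpha$ is positive, decreasing, and convex (convexity following from the positivity of the third difference of $n^\alpha$ when $0<\alpha<1$), and such sequences generate positive-semidefinite Toeplitz matrices.

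The main obstacle is precisely this positivity of the Toeplitz form; everything else is arithmetic with the weights $\omega_{nj}$. The reduction above isolates exactly what must be shown, and rather than proving the positive-semidefiniteness inline I would lean on the appendix result, whose stated purpose is to furnish the positivity needed for time-stepping stability. If the exact form of Lemma~\ref{lem: a_n positive} turned out to assert nonnegativity of $\sum_{n,j}a_{|n-j|}\iprod{v^n,v^j}$ directly, then after the symmetrization step the conclusion is immediate.
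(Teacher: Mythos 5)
Your proposal is correct and essentially follows the paper's own route: the first identity is the same uniform-mesh index-shift computation, and the second inequality rests on the same key positivity result, Lemma~\ref{lem: a_n positive}. Note only that the appendix lemma is already stated in the one-sided form $\sum_n (AU)^nU^n\ge\tfrac12\sum_n(U^n)^2$, so your symmetrization into the Toeplitz form $\sum_{n,j}a_{|n-j|}\iprod{v^n,v^j}$, while valid, is an unnecessary detour --- the paper simply applies the lemma pointwise in $x$ to the one-sided sum and integrates over $\Omega$.
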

\begin{proof}
It follows from~\eqref{eq: omega a_n} that 
$\omega_{n-1,j-1}=\omega_{nj}$.  Thus,
\[
I^\alpha\check v(t_{n-1})=\sum_{j=1}^{n-1}\omega_{n-1,j}v^j
	=\sum_{j=2}^n\omega_{n-1,j-1}v^{j-1}
	=\sum_{j=2}^n \omega_{n,j}v^{j-1}
	=I^\alpha\hat v(t_n)-\omega_{n1}v^0
\]
and so
\[
\int_{I_n}\partial_t^{1-\alpha}\check v\,dt
	=I^\alpha\check v(t_n)-I^\alpha\check v(t_{n-1})
	=I^\alpha(\check v-\hat v)(t_n)+\omega_{n1}v^0,
\]
which gives the first result because $\check v-\hat v=k\partial v$.
To prove the second result, use \eqref{eq: omega a_n} to write
$(I^\alpha\check v)(t_n)=\omega_{\alpha+1}(k)\sum_{j=1}^n a_{n-j}v^j$
and apply Lemma~\ref{lem: a_n positive} (from 
Appendix~\ref{sec: positivity}) to deduce that, pointwise in~$x$,
\[
\sum_{n=1}^Nv^n I^\alpha\check v(t_n)=\omega_{\alpha+1}(k)
	\sum_{n=1}^N\sum_{j=1}^na_{n-j}v^nv^j
	\ge\tfrac12\omega_{\alpha+1}(k)\sum_{n=1}^N(v^n)^2.
\]
The desired inequality follows after integrating over~$\Omega$.
\end{proof}

We are now able to show the following stability estimate. 

\begin{theorem}\label{thm: time stepping stability}
Assume $1/2<\alpha\le1$ and consider the implicit 
scheme~\eqref{eq: time stepping} in the case of uniform time 
steps~$k_n=k$. If $U^0\in H^1_0(\Omega)\cap H^2(\Omega)$ and
\[
1+F(x,t_n)^2+F_x(x,t_n)^2\le C_F
	\quad\text{for $0<x<L$ and $1\le n\le N$,}
\]
and if $k$ is sufficiently small, then for $1\le n\le N$, 
\[
\|U^n-U^0\|^2\le t_n
	E_\alpha\biggl(\frac{C_1t_n^{2\alpha}}{\Gamma(\alpha+1)}\biggr)
	\biggl[ C_2\sum_{j=1}^nk\|\bar g^j\|^2
	+C_3\|U^0\|_2^2\biggl(1+ 
	\frac{t_n^{2\alpha-1}}{2\alpha-1}\biggr)\biggr],
\]
where $C_1=24C_F(1+2C_F)$, $C_2=6(1+2C_F)$ and $C_3=6C_F(1+4C_F)$.
\end{theorem}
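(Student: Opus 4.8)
The plan is to mimic the energy argument from the continuous stability proof (Theorem~\ref{thm: stability Fx}) but at the discrete level, using the discrete Gronwall inequality of Lemma~\ref{lem: discrete Gronwall} in place of its continuous counterpart. First I would introduce $W^n=U^n-U^0$ and rewrite the scheme~\eqref{eq: time stepping} as an equation for~$W^n$; since $\partial_t^{1-\alpha}U^0$ contributes a term $\omega_\alpha(t)U^0$, this will produce an inhomogeneous right-hand side analogous to the function~$J$ in Theorem~\ref{thm: stability Fx}, collecting the forcing~$\bar g^n$ together with the contributions of~$U^0$ through its second spatial derivative and the product~$FU^0$. The natural test function is the discrete analogue of $\partial_t^{1-\alpha}w_h$; by Lemma~\ref{lem: check partial} the relevant quantity is $k(I^\alpha\partial W)(t_n)$, whose integral against $\partial W^n$ I would control using the positivity encoded in the second inequality of Lemma~\ref{lem: check partial}.

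The key steps, in order, are as follows. I would test the $W$-equation with $\int_{I_n}\partial_t^{1-\alpha}\check W\,dt$ and sum over~$n$ from~$1$ to some level~$M$. The diffusion term yields $\sum_n k\|(I^\alpha\partial W_x)(t_n)\|^2$ (plus boundary contributions from~$U^0$), which is nonnegative and can be bounded below or simply retained. The convection term $\langle F^n\partial_t^{1-\alpha}\check W,(\cdot)_x\rangle$ I would handle by integration by parts in~$x$, as in Theorem~\ref{thm: stability Fx}, throwing the derivative onto~$F$ so that the $F_x$ term appears; here, because we no longer assume $F_x\ge0$, I would instead absorb it via Cauchy--Schwarz and the bound $1+F^2+F_x^2\le C_F$, at the cost of a term proportional to $\sum_n k\|(I^\alpha\partial W)(t_n)\|^2$. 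The crucial estimate is then Lemma~\ref{lem: sum I alpha}, which bounds $\sum_n k\|(I^\alpha\partial W)(t_n)\|^2$ by a weighted double sum of $\|\partial W^j\|^2$ of exactly the convolution form $\sum_n\omega_{Nn}\sum_j k\|\partial W^j\|^2$ required to invoke the discrete Gronwall inequality, together with a harmless extra term $\sum_n k^{2\alpha+1}\|\partial W^n\|^2$ that is lower order for small~$k$.

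Setting $y^n=\sum_{j=1}^n k\|\partial W^j\|^2$ (or $\|(I^{\alpha/2}\partial W)(t_n)\|^2$-type quantities), I would assemble the summed energy identity into the form $y^n\le A_n+Bk\sum_{j=0}^{n-1}\omega_\alpha(t_n-t_j)y^j$, where $A_n$ gathers the data terms $\sum_j k\|\bar g^j\|^2$ and the $U^0$-contributions weighted by $t_n^{2\alpha-1}/(2\alpha-1)$, and $B$ is a multiple of~$C_F$. This requires $\alpha>1/2$ so that the coefficient controlling the $\|U^0\|$-term, coming from $\sum_n k\,\omega_\alpha(t_n)^2$, stays finite, mirroring the role of this restriction in Lemma~\ref{lem: partial v v_t}. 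Applying Lemma~\ref{lem: discrete Gronwall} with $\beta=\alpha$ produces the Mittag--Leffler factor $E_\alpha(Bt_n^\alpha)$, and finally I would convert the bound on $\sum_j k\|\partial W^j\|^2$ into a bound on $\|W^n\|^2=\|U^n-U^0\|^2$ by a discrete Cauchy--Schwarz step of the kind used in Lemma~\ref{lem: ||v(t)||^2}, which supplies the outer factor~$t_n$. Tracking the accumulation of the constants through these steps should yield the stated $C_1$, $C_2$, $C_3$.

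The main obstacle will be the bookkeeping around the terms generated by the nonzero initial data~$U^0$ and the need to absorb the "bad" $\sum_n k\|(I^\alpha\partial W)(t_n)\|^2$ contribution coming from both the convection term and the Cauchy--Schwarz splitting of the right-hand side. Because Lemma~\ref{lem: sum I alpha} produces a factor $2\omega_{\alpha+1}(T)$ on the Gronwall kernel and an uncontrolled $\sum_n k^{2\alpha+1}\|\partial W^n\|^2$ remainder, I will need $k$ small enough that the remainder can be moved to the left-hand side and reabsorbed into the positive diffusion/convolution energy; this smallness requirement on~$k$ is exactly the hypothesis stated in the theorem, and getting the absorbed coefficient to come out as an honest constant (rather than something degenerating as $h\to0$ or $\alpha\to1/2$) is where the delicate part of the argument lies.
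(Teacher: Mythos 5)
Your overall scaffolding matches the paper's (the substitution $W^n=U^n-U^0$, the use of Lemma~\ref{lem: check partial}, Lemma~\ref{lem: sum I alpha}, the discrete Gronwall inequality of Lemma~\ref{lem: discrete Gronwall}, the $\alpha>1/2$ restriction via $\sum_n k^{-1}\omega_{n1}^2$, and the final telescoping Cauchy--Schwarz giving the factor $t_n$), but there is a genuine gap: you propose only \emph{one} energy test, against $\int_{I_n}\partial_t^{1-\alpha}\check W\,dt=k(I^\alpha\partial W)(t_n)$, and this alone cannot close the argument. That test controls $\sum_n k\|(I^\alpha\partial W_x)(t_n)\|^2$ on the left, while after invoking Lemma~\ref{lem: sum I alpha} the right-hand side is expressed through $Y^n=\sum_{j\le n}k\|\partial W^j\|^2$. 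Since $Y^n$ never appears on the left, the inequality is not of the self-referential form required by Lemma~\ref{lem: discrete Gronwall}, and moreover the quantity you ultimately need for $\|W^n\|^2\le t_nY^n$ is exactly $Y^n$, which remains unestimated. The positivity in the second part of Lemma~\ref{lem: check partial} does not rescue this: it gives
\[
\sum_{n}k\bigiprod{\partial W^n,(I^\alpha\partial W)(t_n)}
\ge\tfrac12 k\,\omega_{1+\alpha}(k)\sum_n\|\partial W^n\|^2,
\]
whose coefficient degenerates like $k^{1+\alpha}$, so it yields only $Y^N\lesssim k^{-\alpha}(\cdots)$, which is useless. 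For the same reason your claim that the remainder $\sum_nk^{2\alpha+1}\|\partial W^n\|^2$ from Lemma~\ref{lem: sum I alpha} can be ``moved to the left and reabsorbed into the positive diffusion/convolution energy'' fails: the left-hand side produced by your single test contains no $\|\partial W^n\|^2$ term with a $k$-independent coefficient to absorb it into.

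The paper closes the loop with a \emph{second} test of the $W$-equation, against $\partial W^n$ itself. That step puts $k\|\partial W^n\|^2$ directly on the left (the cross term $k\iprod{(I^\alpha\partial W_x)(t_n),\partial W_x^n}$ is nonnegative after summation, again by Lemma~\ref{lem: check partial}, and is dropped), expands $\bigl(F^n(I^\alpha\partial W)(t_n)\bigr)_x$ by the product rule --- which is where the hypothesis on $F_x$ and the $H^2$-norm of $U^0$ (through $\|\Phi^n_x\|\le\sqrt{C_F}\|U^0\|_2$) genuinely enter --- and is then combined with the first test's bound on $\sum_nk\|(I^\alpha\partial W_x)(t_n)\|^2$ to produce $Y^N\le\tfrac12A_N+\tfrac14C_1\sum_nk\|(I^\alpha\partial W)(t_n)\|^2$. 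Only at that point can Lemma~\ref{lem: sum I alpha}, the small-$k$ absorption of the $Y^N$ terms, and Lemma~\ref{lem: discrete Gronwall} be applied. Without this second estimate your plan does not control $\sum_j k\|\partial W^j\|^2$, so the stated bound on $\|U^n-U^0\|^2$ cannot be reached along the route you describe.
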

\begin{proof}
Put $W^n=U^n-U^0$.  Since the mesh is uniform and~$W^0=0$, 
Lemma~\ref{lem: check partial} implies that
\[
\int_{I_n}\partial_t^{1-\alpha}\check U_{xx}\,dt
	=\int_{I_n}\partial_t^{1-\alpha}\check W_{xx}\,dt
	+U^0_{xx}\int_{I_n}\omega_\alpha(t)\,dt
	=k(I^\alpha\partial W_{xx})(t_n)+\omega_{n1}U^0_{xx},
\]
and similarly
\[
\int_{I_n}\bigl(F^n\partial_t^{1-\alpha}\check U\bigr)_x\,dt
	=k\bigl(F^nI^\alpha(\partial U)(t_n)\bigr)_x
	+\omega_{n1}(F^nU^0)_x.
\]
Thus, putting $\Phi^n=U^0_x-F^nU^0$, our time-stepping 
scheme~\eqref{eq: time stepping} implies that
\begin{equation}\label{eq: partial W}
\begin{aligned}
k\,\partial W^n-k(I^\alpha\partial W_{xx})(t_n)
	&=U^n-U^{n-1}-\int_{I_n}\partial_t^{1-\alpha}\check U_{xx}\,dt
	+\omega_{n1}U^0_{xx}\\
	&=k\bar g^n-\int_{I_n}\bigl(F^nI^\alpha\check U\bigr)_x\,dt
	+\omega_{n1}U^0_{xx}\\
	&=k\bar g^n-k\bigl(F^n(I^\alpha\partial W)(t_n)\bigr)_x
	+\omega_{n1}\Phi_x^n.
\end{aligned}
\end{equation}

We take the inner product of both sides of~\eqref{eq: partial W}
with~$(I^\alpha\partial W)(t_n)$, then integrate by parts with respect
to~$x$ and use the fact that $W^n(0)=0=W^n(L)$ to arrive at
\begin{align*}
k\bigiprod{\partial W^n&,(I^\alpha\partial W)(t_n)}
	+k\|(I^\alpha\partial W_x)(t_n)\|^2\\
	&=k\iprod{\bar g^n,(I^\alpha\partial W)(t_n)}
	+\bigiprod{kF^n(I^\alpha\partial W)(t_n)-\omega_{n1}\Phi^n,
	(I^\alpha\partial W_x)(t_n)}\\
	&\le\tfrac12 k\|\bar g^n\|^2
	+\tfrac{1}{2}k\|(I^\alpha\partial W)(t_n)\|^2\\
		&\qquad{}+\tfrac12 k^{-1}\bigl(
		k\|F^n(I^\alpha\partial W)(t_n)\| 
	+\omega_{n1}\|\Phi^n\|\bigr)^2
		+\tfrac{1}{2}k\|(I^\alpha\partial W_x)(t_n)\|^2.
\end{align*}
Since $\|\Phi^n\|^2\le(1+\|F^n\|^2_{L_\infty(\Omega)})\|U^0\|_1^2$,
\begin{multline*}
k\bigiprod{\partial W^n,(I^\alpha\partial W)(t_n)}
	+\tfrac{1}{2}k\|(I^\alpha\partial W_x)(t_n)\|^2
	\le\tfrac12 k\|\bar g^n\|^2\\
	+\bigl(1+\|F^n\|_{L_\infty(\Omega)}^2\bigr)\bigl(
	k\|(I^\alpha\partial W)(t_n)\|^2
	+k^{-1}\omega_{n1}^2\|U^0\|_1^2\bigr),
\end{multline*}
so after summing over~$n$ and applying the second part 
of Lemma~\ref{lem: check partial}, we see that
\begin{multline}\label{eq: next partial W}
\sum_{n=1}^Nk\|(I^\alpha\partial W_x)(t_n)\|^2
	\le\sum_{n=1}^Nk\|\bar g^n\|^2
	+2C_F\sum_{n=1}^Nk\|(I^\alpha\partial W)(t_n)\|^2\\
	+2C_F\|U^0\|_1^2\sum_{n=1}^Nk^{-1}\omega_{n1}^2.
\end{multline}
Recall from~\eqref{eq: omega a_n} that 
$\omega_{n1}=\omega_{\alpha+1}(k)a_{n-1}$ with
$a_n=(n+1)^\alpha-n^\alpha$, and observe that
$a_n\le\alpha n^{\alpha-1}$ for~$n\ge1$.  Thus, for~$k$ sufficiently 
small,
\begin{equation}\label{eq: sum omega_n1^2}
\begin{aligned}
\sum_{n=1}^Nk^{-1}\omega_{n1}^2
	&=\frac{k^{2\alpha-1}}{\Gamma(\alpha+1)^2}\sum_{n=0}^{N-1}a_n^2
	\le\frac{k^{2\alpha-1}}{\Gamma(\alpha+1)^2}\biggl(
	1+(2^\alpha-1)^2+\sum_{n=2}^{N-1}(\alpha n^{\alpha-1})^2\biggr)\\
	&\le 1+\frac{k^{2\alpha-1}}{\Gamma(\alpha)^2}
		\int_1^{N-1}y^{2\alpha-2}\,dy
	\le1+\frac{t_{N-1}^{2\alpha-1}}{2\alpha-1},
\end{aligned}
\end{equation}
where, in the final step, we used the assumption that 
$1/2<\alpha<1$ and the fact that $\Gamma(\alpha)\ge1$.

In a similar fashion, we next take the inner product 
of~\eqref{eq: partial W} with~$\partial W^n$ to obtain
\begin{align*}
k\|\partial W^n\|^2
	&+k\bigiprod{(I^\alpha\partial W_x)(t_n),\partial W_x^n}\\
	&=k\iprod{\bar g^n,\partial W^n}
	-\bigiprod{k\bigl(
		F^n(I^\alpha\partial W)(t_n)\bigr)_x,\partial W^n}
		+\iprod{\omega_{n1}\Phi_x^n,\partial W^n}\\
	&\le\tfrac32k\|\bar g^n\|^2
	+\tfrac32k\bigl\|F^n_x(I^\alpha\partial W)(t_n)
		+F^n(I^\alpha\partial W_x)(t_n)\bigr\|^2\\
	&\qquad{}+\tfrac32k^{-1}\omega_{n1}^2\|\Phi^n_x\|^2
		+(\tfrac16+\tfrac16+\tfrac16)k\|\partial W^n\|^2
\end{align*}
and hence
\begin{multline*}
\tfrac{1}{2}k\|\partial W^n\|^2
	+k\bigiprod{(I^\alpha\partial W_x)(t_n),\partial W^n_x}
	\le\tfrac32 k\|\bar g^n\|^2
	+\tfrac32k^{-1}\omega_{n1}^2\|\Phi^n_x\|^2\\
	+3k\|F^n_x\|_{L_\infty(\Omega)}^2\|(I^\alpha\partial W)(t_n)\|^2
	+3k\|F^n\|_\infty^2\|(I^\alpha\partial W_x)(t_n)\|^2.
\end{multline*}
Since $\|\Phi^n_x\|^2=\|U^0_{xx}-F^n_xU^0-F^nU^0_x\|^2
\le C_F\|U^0\|_2^2$, after summing over~$n$ it 
follows from the second part of Lemma~\ref{lem: check partial} that
\begin{multline*}
Y^N\equiv\sum_{n=1}^Nk\|\partial W^n\|^2
	\le3\sum_{n=1}^Nk\|\bar g^n\|^2
	+3C_F\|U^0\|_2^2
		\sum_{n=1}^Nk^{-1}\omega_{n1}^2\\
	+6C_F\sum_{n=1}^Nk\|(I^\alpha\partial W)(t_n)\|^2
	+6C_F\sum_{n=1}^Nk\|(I^\alpha\partial W_x)(t_n)\|^2,
\end{multline*}
which, together with \eqref{eq: next partial W}~and
\eqref{eq: sum omega_n1^2}, implies that 
\[
Y^N\le\tfrac12A_N+\tfrac14 C_1\sum_{n=1}^Nk
	\|(I^\alpha\partial W)(t_n)\|^2,
\]
where
\[
A_N=C_2\sum_{n=1}^Nk\|\bar g^n\|^2
	+C_3\|U^0\|^2_2
	\biggl(1+\frac{t_N^{2\alpha-1}}{2\alpha-1}\biggr).
\]
Hence, by Lemma~\ref{lem: sum I alpha},
\begin{align*}
Y^N&\le\tfrac12A_N+\tfrac12C_1\biggl(\omega_{\alpha+1}(T)
	\sum_{n=1}^N\omega_{Nn}Y^n+\sum_{n=1}^Nk^{2\alpha+1}
	\|\partial W^n\|^2\biggr)\\
&\le\tfrac12A_N+\tfrac{1}{2}C_1\omega_{\alpha+1}(T)
	\sum_{n=1}^{N-1}\omega_{Nn}Y^n
	+\tfrac12C_1\bigl(\omega_{\alpha+1}(T)\omega_{NN}
	+k^{2\alpha}\bigr)Y^N.
\end{align*}
For $k$ sufficiently small, the term 
in~$Y^N$ on the right-hand side is bounded by~$\tfrac12Y^N$.
Therefore, because 
$\omega_{Nn}\le k^\alpha(N-n)^{\alpha-1}/\Gamma(\alpha)$,
\[
Y^N\le A_N+\frac{B_Nk^\alpha}{\Gamma(\alpha)}\sum_{n=1}^{N-1}
	(N-n)^{\alpha-1}Y^n
	\quad\text{where}\quad
	B_N=C_1\omega_{\alpha+1}(t_N),
\]
and so
\[
Y^n\le A_n+B_Nk\sum_{j=0}^{n-1}\omega_\alpha(t_n-t_j)Y^j
	\quad\text{for $0\le n\le N$.}
\]
Thus, by Lemma~\ref{lem: discrete Gronwall},
$Y^N\le A_NE_\alpha\bigl(B_Nt_N^\alpha\bigr)
=A_NE_\alpha\bigl(C_1t_N^{2\alpha}/\Gamma(\alpha+1)\bigr)$. Finally, 
\[
\|W^n\|^2=\biggl\|\sum_{j=1}^nk\partial W^j\biggr\|^2
	\le\biggl(\sum_{j=1}^nk\biggr)
	\biggl(\sum_{j=1}^nk\|\partial W^j\|^2\biggr)=t_nY^n,
\]
and the result follows.
\end{proof}

We can now prove the following error bound, which implies
\[
\|U^n-u(t_n)\|=O(k^\alpha),
\]
if $u$ is sufficiently regular and if $\|\bar g^j-g(t)\|\le Ck^\alpha$
for~$t\in I_j$; recall that $|v|_r=\|v^{(r)}\|$.

\begin{theorem}\label{thm: time error}
Assume $1/2<\alpha\le1$ and consider the implicit 
scheme~\eqref{eq: time stepping} in the case of uniform time 
steps~$k_n=k$.  If 
$F\in L_\infty\bigl((0,T),W^1_\infty(\Omega)\bigr)$, and if $k$ is 
sufficiently small, then for $0\le t_n\le T$,
\begin{multline*}
\|U^n-u(t_n)\|^2\le C\sum_{j=1}^n\int_{I_j}\|\bar g^j-g(t)\|^2\,dt
	+Ck^{2\alpha-1}\int_0^k t|u_t|_2^2\,dt\\
	+Ck^{2\alpha}\int_k^{t_n}|u_t|_2^2\,dt
	+Ck^2\|u_0\|_1^2+Ck^{2\alpha}\int_0^{t_n}\|u_t\|_1^2\,dt,
\end{multline*}
where $C$ depends on $\alpha$, $F$ and $T$.
\end{theorem}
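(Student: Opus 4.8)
The plan is to derive an error equation having exactly the form of the time-stepping scheme~\eqref{eq: time stepping} and then to invoke the stability estimate of Theorem~\ref{thm: time stepping stability}. Write $e^n=U^n-u(t_n)$, and let $\check u$ denote the piecewise-constant function equal to $u(t_n)$ on each interval~$I_n$, so that $\eta=u-\check u$ is the interpolation error. Subtracting the integrated equation~\eqref{eq: integrated FP} from the scheme~\eqref{eq: time stepping}, and adding and subtracting the quantity obtained by inserting $\check u$ into the scheme, I would show that $\check e=\check U-\check u$ satisfies
\[
e^n-e^{n-1}-\int_{I_n}\partial_t^{1-\alpha}\check e_{xx}\,dt
	+\int_{I_n}\bigl(F^n\partial_t^{1-\alpha}\check e\bigr)_x\,dt
	=k\bar g^n-R_n,
\]
where the consistency term is
\[
R_n=\int_{I_n}\partial_t^{1-\alpha}\eta_{xx}\,dt
	+\int_{I_n}\bigl[(F^n\partial_t^{1-\alpha}\check u)_x
		-(F\partial_t^{1-\alpha}u)_x\bigr]\,dt
	+\int_{I_n}g\,dt.
\]
This has the same structure as~\eqref{eq: time stepping}, with forcing $\bar g^n$ replaced by $\bar g^n-R_n/k$ and with vanishing initial value $e^0=U^0-u_0=0$.

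Applying Theorem~\ref{thm: time stepping stability} to this equation, the $\|U^0\|_2$ contributions drop out because $e^0=0$, leaving
\[
\|e^n\|^2\le Ct_nE_\alpha\!\Bigl(\tfrac{C_1t_n^{2\alpha}}{\Gamma(\alpha+1)}\Bigr)
	\sum_{j=1}^n\frac1k\bigl\|k\bar g^j-R_j\bigr\|^2,
\]
so that, after absorbing $t_nE_\alpha(\cdots)\le TE_\alpha(C_1T^{2\alpha}/\Gamma(\alpha+1))$ into $C$, the whole problem reduces to estimating $\sum_j k^{-1}\|k\bar g^j-R_j\|^2$. I would split $k\bar g^j-R_j$ into three pieces. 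The source-quadrature piece $k\bar g^j-\int_{I_j}g\,dt$ yields the first term of the claimed bound immediately, since Cauchy--Schwarz gives $k^{-1}\|\int_{I_j}(\bar g^j-g)\,dt\|^2\le\int_{I_j}\|\bar g^j-g\|^2\,dt$. The advection piece I would decompose, by inserting $\pm F^n\partial_t^{1-\alpha}u$, into a part driven by the interpolation error $\eta$ (treated as the diffusion term below, but with one fewer $x$-derivative, which is the source of the term $Ck^{2\alpha}\int_0^{t_n}\|u_t\|_1^2\,dt$) and a part measuring the time-variation $F^n-F$ of the frozen coefficient, controlled using the regularity of~$F$.

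The heart of the matter, and the main obstacle, is the fractional-diffusion consistency piece
\[
\sum_{n=1}^N\frac1k\Bigl\|\int_{I_n}\partial_t^{1-\alpha}\eta_{xx}\,dt\Bigr\|^2
=\sum_{n=1}^N\frac1k\bigl\|I^\alpha\eta_{xx}(t_n)-I^\alpha\eta_{xx}(t_{n-1})\bigr\|^2,
\]
which is the accumulated error of the right-endpoint quadrature $I^\alpha\check u(t_n)=\sum_j\int_{I_j}\omega_\alpha(t_n-s)u(t_j)\,ds$ for $I^\alpha u(t_n)$, since on $I_j$ one has $\eta(s)=u(s)-u(t_j)=-\int_s^{t_j}u_t\,d\tau$. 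The difficulty is the interaction of this $O(k)$-per-step error with the weakly singular kernel $\omega_\alpha$. I expect to isolate the first subinterval, where $\omega_\alpha(t_n-s)$ is most singular and $u_t$ is least regular; bounding its contribution by a $t$-weighted estimate is what produces the terms $Ck^{2\alpha-1}\int_0^kt\,|u_t|_2^2\,dt$ and $Ck^2\|u_0\|_1^2$, which are higher order than $k^{2\alpha}$ precisely because $\alpha>1/2$. For the remaining intervals I would bound the kernel factor by its smooth part, interchange summation and integration, and apply a convolution estimate of the type underlying Lemmas~\ref{lem: int I alpha} and~\ref{lem: sum I alpha} to recover the uniform rate $Ck^{2\alpha}\int_k^{t_n}|u_t|_2^2\,dt$. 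Collecting the three pieces then gives the stated bound.
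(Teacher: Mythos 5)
Your overall route is the paper's route: subtract the integrated equation~\eqref{eq: integrated FP} from the scheme~\eqref{eq: time stepping}, regard the resulting residual as a forcing term, invoke Theorem~\ref{thm: time stepping stability} with $e^0=0$ so the $\|U^0\|_2$ terms vanish, and then split the residual into the source-quadrature piece, the fractional-diffusion consistency piece, and an advection piece further split by inserting $\pm F^n\partial_t^{1-\alpha}u$. This matches the paper's decomposition $\rho^n=\rho^n_1+\rho^n_2+\rho^n_{31}+\rho^n_{32}$ exactly, and your treatment of $\rho^n_1$ and of the $\eta$-driven advection part coincides with the paper's.

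There are, however, two concrete problems in the sketch. First, you attribute the term $Ck^2\|u_0\|_1^2$ to the first subinterval of the diffusion-consistency piece; that is not where it can come from. The weighted estimate of that piece yields only $Ck^{2\alpha-1}\int_0^k t\,|u_t|_2^2\,dt$, with no initial-data term. The $\|u_0\|_1^2$ term arises from the frozen-coefficient part $\rho^n_{32}$, which you dismiss as ``controlled using the regularity of $F$'': after using $\|F(t)-F^n\|_1\le Ck$ one is left needing a bound for $\int_0^{t_N}\|\partial_t^{1-\alpha}u\|_1^2\,dt$, and since $\partial_t^{1-\alpha}u=\omega_\alpha(t)u_0+I^\alpha u_t$ this is exactly where Lemma~\ref{lem: partial v v_t} (hence the restriction $\alpha>1/2$) and the $\|u_0\|_1^2$ contribution enter; your plan as written hides this step and would stall there. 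Second, the core estimate of $\sum_n k^{-1}\bigl\|\int_{I_n}\partial_t^{1-\alpha}\eta_{xx}\,dt\bigr\|^2$ is only gestured at, and the stated rationale is backwards: $\omega_\alpha(t_n-s)$ is most singular near $s=t_n$, not on the first subinterval; the first-interval weight in the theorem reflects the anticipated lack of regularity of $u_t$ near $t=0$, and in the paper the weight $(t-t_{j-1})$ appears on \emph{every} subinterval, the split at $t=k$ being made only at the end. What actually closes this step in the paper is a pointwise analysis of the kernel differences $\Lambda_n(s)=\omega_\alpha(t_n-s)-\omega_\alpha(t_{n-1}-s)$: weighted Cauchy--Schwarz via the quantities $\delta_{nj}$, direct integration of $\omega_\alpha^2$ on the two near-diagonal subintervals, and the Mean Value Theorem giving $|\Lambda_n(s)|\le k\,|\omega_\alpha'(t_{n-1}-s)|$ off the diagonal, whose squares sum like $(n-1-j)^{2\alpha-4}$. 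Your appeal to ``a convolution estimate of the type underlying Lemmas~\ref{lem: int I alpha} and~\ref{lem: sum I alpha}'' is not that argument, and as stated it is not yet a proof of the uniform $Ck^{2\alpha}$ rate; a workable alternative along your lines would need, for instance, Cauchy--Schwarz against $|\Lambda_n|$ together with the telescoping bound $\sum_{n\ge j}|\Lambda_n(s)|\le 2\omega_\alpha(t_j-s)$ for $s\in I_j$, which you would have to supply explicitly.
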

\begin{proof}
Denote the error at the $n$th time level by~$e^n=U^n-u(t_n)$.
Subtracting \eqref{eq: integrated FP} from~\eqref{eq: time stepping} 
yields 
\[
e^n-e^{n-1}-\int_{I_n}\partial_t^{1-\alpha}\check e_{xx}\,dt
	+\int_{I_n}\bigl(F^n\partial_t^{1-\alpha}\check e\bigr)_x\,dt
	=k\rho^n,
\]
where $\rho^n=\rho^n_1+\rho^n_2+\rho^n_3$ for
\begin{gather*}
\rho_1^n=\bar g^n-\frac{1}{k}\int_{I_n}g(t)\,dt,\qquad
\rho_2^n=\frac{1}{k}\int_{I_n}\partial_t^{1-\alpha}
		(\check u-u)_{xx}\,dt,\\
\rho_3^n=\frac{1}{k}\int_{I_n}\bigl(
	F\partial_t^{1-\alpha}u
	-F^n\partial_t^{1-\alpha}\check u\bigr)_x\,dt.
\end{gather*}
Applying Theorem~\ref{thm: time stepping stability}, with $e^n$~and 
$\rho^n$ playing the roles of $U^n$~and $\bar g^n$, and noting that 
$e^0=0$ by~\eqref{eq: U0}, we see that
\begin{equation}\label{eq: e^N}
\|e^N\|^2\le C\sum_{n=1}^Nk\|\rho^n\|^2
	\quad\text{for $1\le n\le N$.}
\end{equation}
Since $\rho^n_1=k^{-1}\int_{I_n}(\bar g^n-g)\,dt$, we have
\begin{equation}\label{eq: sum rho1}
\sum_{n=1}^N k\|\rho^n_1\|^2
	\le\sum_{n=1}^N\int_{I_n}\|\bar g^n-g\|^2\,dt,
\end{equation}
and if we put
\[
\Lambda_n(s)=\begin{cases}
	\omega_\alpha(t_n-s),&t_{n-1}<s<t_n,\\
	\omega_\alpha(t_n-s)-\omega_\alpha(t_{n-1}-s),&0<s<t_{n-1},
\end{cases}
\]
and 
$\delta_{nj}(t)=(t-t_{j-1})^{-1/2}\int_{t_{j-1}}^t\Lambda_n(s)\,ds$ 
for~$t\in I_j$, then
\begin{align*}
k\rho^n_2&=I^\alpha(\check u-u)_{xx}(t_n)
	-I^\alpha(\check u-u)_{xx}(t_{n-1}) 
	=\int_0^{t_n}\Lambda_n(s)(\check u-u)_{xx}(s)\,ds\\
	&=\sum_{j=1}^n\int_{I_j}\Lambda_n(s)\int_s^{t_j}u_{xxt}(t)\,dt\,ds
	=\sum_{j=1}^n\int_{I_j}\delta_{nj}(t)u_{xxt}(t)
		(t-t_{j-1})^{1/2}\,dt.
\end{align*}
Hence,
\begin{equation}\label{eq: rho2}
\begin{aligned}
\sum_{n=1}^N k\|\rho^n_2\|^2&\le\frac{1}{k}\sum_{n=1}^N\sum_{j=1}^n
	\int_{I_j}\|u_{xxt}(t)\|^2(t-t_{j-1})\,dt
	\int_{I_j}\delta_{nj}(t)^2\,dt\\
	&=\frac{1}{k}\sum_{j=1}^N\int_{I_j}(t-t_{j-1})\|u_{xxt}(t)\|^2\,dt
	\,\sum_{n=j}^N\int_{I_j}\delta_{nj}(t)^2\,dt.
\end{aligned}
\end{equation}
We find that
\[
\delta_{nn}(t)^2\le\int_{t_{n-1}}^t\omega_\alpha(t_n-s)^2\,ds
	=\frac{k^{2\alpha-1}-(t_n-t)^{2\alpha-1}}%
{\Gamma(\alpha)^2(2\alpha-1)}\quad\text{for $t\in I_n$,}
\]
and, since $0<\omega_\alpha(t_n-s)<\omega_\alpha(t_{n-1}-s)$ 
for~$s<t_{n-1}$,
\[
\delta_{n,n-1}(t)^2\le\int_{t_{n-2}}^t\omega_\alpha(t_{n-1}-s)^2\,ds
	=\frac{k^{2\alpha-1}-(t_{n-1}-t)^{2\alpha-1}}%
{\Gamma(\alpha)^2(2\alpha-1)}\quad\text{for $t\in I_{n-1}$,}
\]
whereas if $1\le j\le n-2$, then the Mean Value Theorem implies that
\[
\delta_{nj}(t)^2\le\int_{t_{j-1}}^t
	\bigl[\omega_\alpha'(t_{n-1}-s)k\bigr]^2\,ds
	\le\frac{(1-\alpha)^2}{\Gamma(\alpha)^2}\,(n-1-j)^{2\alpha-4}
	k^{2\alpha-1}\quad\text{for $t\in I_j$,}
\]
so
\[
\int_{I_j}\delta_{nj}(t)^2\,dt\le Ck^{2\alpha}\times
\begin{cases}
	(n-1-j)^{-2\alpha-4},&1\le j\le n-2,\\
	1,&n-1\le j\le n.
\end{cases}
\]
Thus,
\[
\sum_{n=j}^N\int_{I_j}\delta_{nj}(t)^2\,dt
	\le Ck^{2\alpha}\biggl(2+\sum_{n=j+2}^N(n-1-j)^{-2\alpha-4}
	\biggr)\le Ck^{2\alpha},
\]
and therefore by~\eqref{eq: rho2},
\begin{equation}\label{eq: sum rho2}
\sum_{n=1}^N k\|\rho^n_2\|^2\le Ck^{2\alpha-1}\sum_{n=1}^N
	\int_{I_n}(t-t_{n-1})\|u_{xxt}\|^2\,dt.
\end{equation}

It remains to deal 
with~$\rho^n_3=\rho^n_{31}+\rho^n_{32}$, where
\begin{align*}
\rho^n_{31}&=\frac{1}{k}\int_{I_n}\Bigl(
	F_x^n\partial_t^{1-\alpha}(u-\check u)
	+F^n\partial_t^{1-\alpha}(u-\check u)_x\Bigr)\,dt,\\
\rho^n_{32}&=\frac{1}{k}\int_{I_n}\Bigl(
	(F-F^n)_x\partial_t^{1-\alpha}u
	+(F-F^n)\partial_t^{1-\alpha}u_x\Bigr)\,dt.
\end{align*}
Estimating $\rho^n_{31}$ in the same way as~$\rho^n_2$, we see that
\begin{equation}\label{eq: sum rho31}
\sum_{n=1}^Nk\|\rho^n_{31}\|^2 
	\le Ck^{2\alpha-1}\sum_{n=1}^N 
	\int_{I_n}(t-t_{n-1})\|u_t\|_1^2\,dt
	\le Ck^{2\alpha}\int_0^{t_N}\|u_t\|_1^2\,dt.
\end{equation}
Next, since $\|F(t)-F^n\|_1\le Ck$ for~$t\in I_n$,
\[
\|\rho^n_{32}\|^2
	\le k^{-2}\int_{I_n}\bigl\|F(t)-F^n\bigr\|_1^2\,dt
	\int_{I_n}\|\partial_t^{1-\alpha}u\|_1^2\,dt
	\le Ck\int_{I_n}\|\partial_t^{1-\alpha}u\|_1^2\,dt,
\]
so, using Lemma~\ref{lem: partial v v_t},
\begin{equation}\label{eq: sum rho32}
\sum_{n=1}^Nk\|\rho^n_{32}\|^2\le Ck^2\int_0^{t_N}
	\|\partial_t^{1-\alpha}u\|_1^2\,dt
	\le Ck^2\biggl(\|u_0\|_1^2+\int_0^{t_N}\|u_t\|_1^2\,dt\biggr).
\end{equation}
The error bound now follows from \eqref{eq: e^N}, 
\eqref{eq: sum rho1} and \eqref{eq: sum rho2}--\eqref{eq: sum rho32}.
\end{proof}
\section{Numerical experiments}\label{sec: numerical}
Our discrete-time solution~$U^n\in H^1_0(\Omega)$ 
of~\eqref{eq: time stepping} satisfies
\[
\iprod{U^n-U^{n-1},v}
	+\int_{I_n}\bigiprod{\partial_t^{1-\alpha}\check U_x,v_x}\,dt
	-\int_{I_n}\bigiprod{F^n\partial_t^{1-\alpha}\check U,v_x}\,dt
	=\int_{I_n}\iprod{g,v}\,dt
\]
for all~$v\in H^1_0(\Omega)$.  We therefore seek a 
fully-discrete solution~$U^n_h\in\Sh$ given by
\[
\iprod{U^n_h-U^{n-1}_h,v}
	+\int_{I_n}\bigiprod{\partial_t^{1-\alpha}\check U_{hx},v_x}\,dt
	-\int_{I_n}\bigiprod{F^n\partial_t^{1-\alpha}\check U_h,v_x}\,dt
	=\int_{I_n}\iprod{g,v}\,dt
\]
for all $v\in\Sh$ and for $1\le n\le N$, with $U^0_h=R_hu_0$.  
(In our case, the Ritz projection~$R_hu_0$ is simply the nodal 
interpolant to~$u_0$.) Explicitly, let $\phi_p\in\Sh$ denote the $p$th 
nodal basis function, so that $\phi_p(x_q)=\delta_{pq}$ and
\[
U^n_h(x)=\sum_{p=1}^{P-1}U^n_p\phi_p(x) \quad
	\text{where $U^n_p=U^n_h(x_p)\approx U^n(x_p)\approx u(x_p,t_n)$.}
\]
Define the $(P-1)\times(P-1)$ tridiagonal matrices $\matM$~and 
$\matB^n$ with entries
$M_{pq}=\iprod{\phi_q,\phi_p}$ and
$B^n_{pq}=\iprod{\phi_{qx},\phi_{px}}-\iprod{F^n\phi_q,\phi_{px}}$,
and define $(P-1)$-dimensional 
column vectors $\vecU^n$~and $\vecG^n$ with components $U^n_p$~and 
$G^n_p=\int_{I_n}\iprod{g,\phi_p}\,dt$.  We find that
\[
\matM\vecU^n-\matM\vecU^{n-1}+\sum_{j=1}^n\omega_{nj}\matB^n\vecU^j
	-\sum_{j=1}^{n-1}\omega_{n-1,j}\matB^n\vecU^j=\vecG^n,
\]
so at the $n$th time step we must solve the linear system
\[
\bigl(\matM+\omega_{nn}\matB^n\bigr)\vecU^n=\matM\vecU^{n-1}+\vecG^n
	-\sum_{j=1}^{n-1}\bigl(\omega_{nj}-\omega_{n-1,j}\bigr)\matB^n
	\vecU^j.
\]
We now describe some experiments using this numerical scheme.  
\subsection{Convergence behaviour}

\begin{table}
\caption{Behaviour of~$E_{N,h}$, defined by~\eqref{eq: ENh}, as the 
number of time steps~$N$ increases, for different choices of the 
mesh grading parameter~$\gamma$. In each case, $\alpha=0.625$ and 
we use a spatial resolution~$h=L/P$ with~$P=5120$.}
\label{tab: max norm error}
\begin{center}
\begin{tabular}{r|cc|cc|cc}
\hline
$N$&
$\gamma=1.0$&$r_t$&
$\gamma=\alpha^{-1}=1.6$&$r_t$&
$\gamma=2.0$&$r_t$\\
\hline
 80  & 8.93e-03 &       & 8.60e-03 &       & 1.01e-02 \\ 
160  & 4.95e-03 & 0.851 & 4.33e-03 & 0.989 & 5.09e-03 & 0.986\\ 
320  & 2.80e-03 & 0.823 & 2.18e-03 & 0.993 & 2.56e-03 & 0.992\\ 
640  & 1.62e-03 & 0.791 & 1.09e-03 & 0.996 & 1.28e-03 & 0.995\\
\hline
\end{tabular} 
\end{center}
\end{table}

\begin{table}
\caption{Behaviour of~$E_{N,h}$, defined by~\eqref{eq: ENh}, as the 
number of time steps~$N$ increases, for different choices 
of~$\alpha$.  In each case, $\gamma=1$ and we use a spatial 
resolution~$h=L/P$ with~$P=5120$.}
\label{tab: fixed h var alpha}
\begin{center}
\begin{tabular}{r|cc|cc|cc}
\hline
$N$&
$\alpha=0.25$&$r_t$&
$\alpha=0.50$&$r_t$&
$\alpha=0.75$&$r_t$\\
\hline
 80  & 1.93e-01 &       & 2.21e-02 &       & 7.40e-03 \\ 
160  & 1.70e-01 & 0.183 & 1.50e-02 & 0.554 & 3.73e-03 & 0.989\\ 
320  & 1.50e-01 & 0.188 & 1.04e-02 & 0.538 & 1.88e-03 & 0.990\\ 
640  & 1.31e-01 & 0.193 & 7.20e-03 & 0.525 & 9.46e-04 & 0.989\\ 
\hline
\end{tabular} 
\end{center}
\end{table}

\begin{table}
\caption{Behaviour of~$E_{N,h}$, defined by~\eqref{eq: ENh}, as the 
spatial resolution~$h=L/P$ increases, for different choices 
of~$\alpha$.  In each case, $\gamma=\alpha^{-1}$ and
we use $N=10,000$ time steps.}
\label{tab: fixed N var alpha}
\begin{center}
\begin{tabular}{r|cc|cc|cc}
\hline
$P$&
$\alpha=0.25$&$r_x$&
$\alpha=0.50$&$r_x$&
$\alpha=0.75$&$r_x$\\
\hline
 4  & 8.43e-02 &       & 8.22e-02 &       & 7.74e-02 \\ 
 8  & 2.97e-02 & 1.505 & 2.92e-02 & 1.495 & 2.77e-02 & 1.483\\ 
16  & 6.21e-03 & 2.258 & 6.07e-03 & 2.264 & 5.75e-03 & 2.268\\ 
32  & 1.50e-03 & 2.052 & 1.46e-03 & 2.054 & 1.39e-03 & 2.046\\ 
64  & 3.47e-04 & 2.108 & 3.23e-04 & 2.177 & 3.03e-04 & 2.201\\ 
\hline
\end{tabular} 
\end{center}
\end{table}

\begin{figure}
\caption{Estimated convergence rate $r_t$ as a function of~$\alpha$, 
with uniform time steps.}
\label{fig: conv rate}
\begin{center}
\includegraphics[scale=0.4]{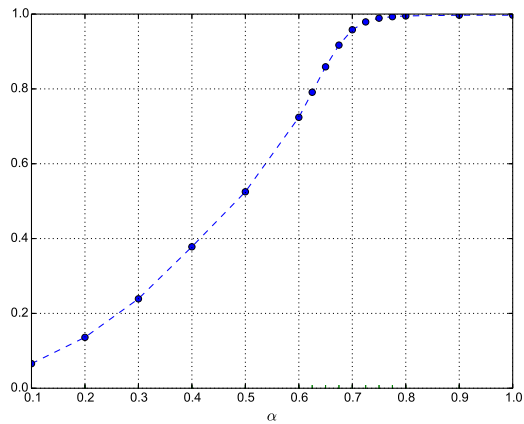}
\end{center}
\end{figure}

\begin{figure}
\caption{Contour plot of the solution for the problem of 
Section~\ref{sec: application}.  The dashed line shows the first 
moment~$\bar x(t)$.}
\label{fig: application}
\begin{center}
\includegraphics[scale=0.4]{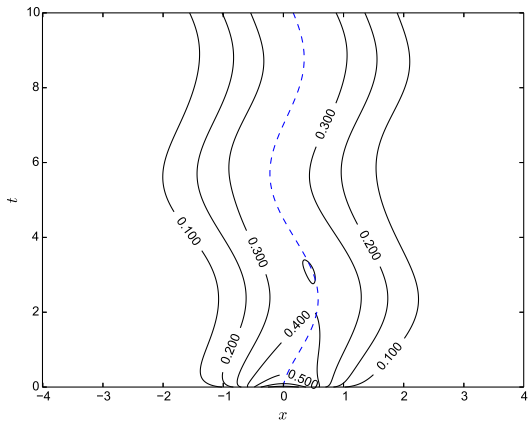}
\end{center}
\end{figure}

\begin{figure}
\caption{Top: first moment (as computed via Laplace transforms) of 
the solution for the problem of 
Section~\ref{sec: application}.  Bottom: error in the first moment 
of~$U^n_h$.}
\label{fig: x bar}
\begin{center}
\includegraphics[scale=0.4]{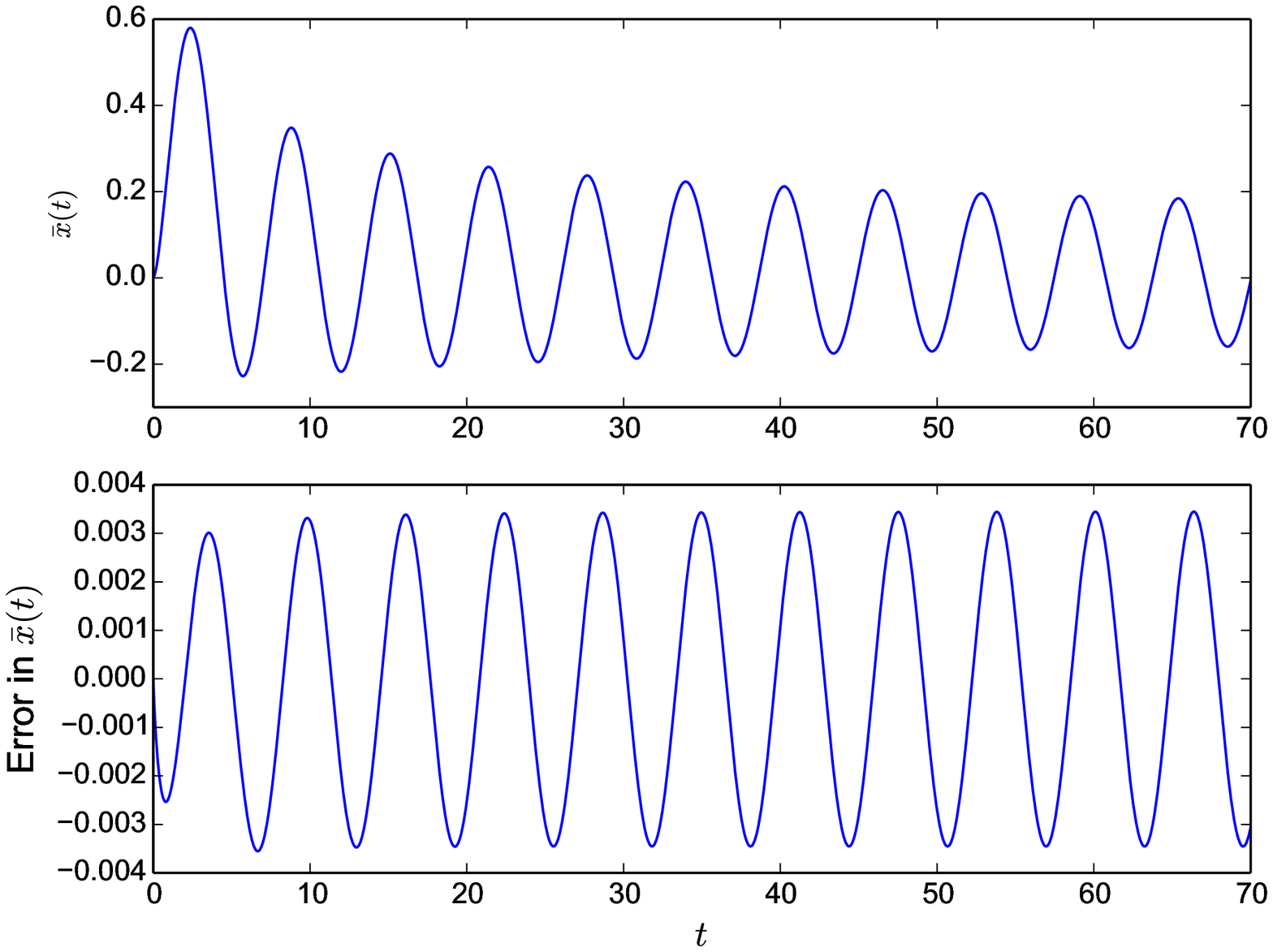}
\end{center}
\end{figure}

In our first test problem, we considered~\eqref{eq: ibvp} with 
\[
F(x,t)=x+\sin t,\quad T=1,\quad L=\pi,\quad\kappa_\alpha=\mu_\alpha=1,
\]
where the source term~$g$ was chosen so that 
$u(x,t)=[1+\omega_{1+\alpha}(t)]\sin x$.  It follows that 
$u_t=O(t^{\alpha-1})$ as~$t\to0^+$, and this singular behaviour is 
known to be typical~\cite{McLean2010} for the fractional diffusion 
equation (that is, when the lower-order term in~$F$ is absent).  We
employed a uniform spatial grid with~$h=\pi/P$, but allowed a 
nonuniform spacing in time by putting
\[
t_n=(n/N)^\gamma T,\quad\text{where $\gamma\ge1$.}
\]
Thus, $\gamma=1$ gives a uniform mesh with~$k=T/N$, but if $\gamma>1$
then the time step is initially~$k_1=T/N^\gamma=O(k^\gamma)$ and 
increases monotonically up to a maximum 
of~$k=k_N\approx\gamma T/N$. Such meshes~\cite{McLeanMustapha2009} 
are commonly used to compensate for singular behaviour 
in the derivatives of~$u$ at~$t=0$. As a measure of the error in the 
numerical solution, we computed
\begin{equation}\label{eq: ENh}
E_{N,h}=\max_{0\le n\le N}\|U^n_h-u(t_n)\|_{L_2(\Omega)}, 
\end{equation}
(where the spatial $L_2$-norm was evaluated via Gauss quadrature) and 
sought to estimate the convergence rates $r_t$~and $r_x$ such that
\[
E_{N,h}\approx C_1k^{r_t}+C_2h^{r_x},
\]
from the relations
\[
\begin{aligned}
r_t&\approx r_t(N,h)=\log_2(E_{N,h}/E_{2N,h})&
	&\text{when $h^{r_x}\ll k^{r_t}$,}\\
r_x&\approx r_x(N,h)=\log_2(E_{N,2h}/E_{N,h})&
	&\text{when $k^{r_t}\ll h^{r_x}$.}
\end{aligned}
\]

We first tested the convergence behaviour with respect to the time 
discretization.  Table~\ref{tab: max norm error} shows how~$E_{N,h}$ 
varies with~$N$, for a fixed, high-resolution spatial grid with 
$P=5120$~subintervals, when $\alpha=0.625$ and for three choices 
of~$\gamma$. In the case of a uniform mesh ($\gamma=1$), we observe 
$r_t\approx0.8$, suggesting that the $O(k^\alpha)$ error bound of 
Theorem~\ref{thm: time error} is somewhat pessimistic in this case.  
Although the convergence analysis of our time-stepping scheme applies 
only when~$\gamma=1$, we observe that $E_{N,h}\approx Ck$
if $\gamma\ge\alpha^{-1}=1.6$. (The constant~$C$ is smallest 
when~$\gamma=\alpha^{-1}$.)  Table~\ref{tab: fixed h var alpha} shows 
results for three different choices of~$\alpha$ as we vary $N$, using 
uniform time steps ($\gamma=1$) and the same fixed spatial grid as 
before.  Note that the choices $\alpha=0.25$~and $\alpha=0.5$ are 
excluded by our theory, which requires $1/2<\alpha<1$. 
Figure~\ref{fig: conv rate} gives a more complete picture of 
the convergence rate~$r_t$ as a function of~$\alpha$ when~$\gamma=1$, 
and may be compared with the known result $r_t=\min(2\alpha,1)$ for 
the homogeneous  diffusion equation (that is, the special case 
$F=0$~and $g=0$) with regular initial 
data~\cite[Lemma~6]{McLeanMustapha2015}.

Next, we tested how $E_{N,h}$ behaves as the spatial mesh is refined,
using a fixed, high-resolution time discretization with~$N=10,000$.
Table~\ref{tab: fixed N var alpha} shows results for three different 
choices of~$\alpha$ using a mesh grading~$\gamma=\alpha^{-1}$ in each 
case.  We see that $E(N,h)\approx C_1h^2$, consistent with 
Theorem~\ref{thm: uh error} (when~$1/2<\alpha<1$).

\subsection{An application}\label{sec: application}
In our second example, we solve the homogeneous equation on the 
spatial interval~$(-L,L)$, that is,
\[
u_t-\partial_t^{1-\alpha}u_{xx}
	+\bigl(F\partial_t^{1-\alpha}u\bigr)_x
	=0\quad\text{for $0<t<T$ and $-L<x<L$,}
\]
with $F=-x+\sin t$, subject to the boundary
conditions $u(\pm L,t)=0$.  For the initial data~$u_0$, 
we chose a normal probability density function with mean~$0$ and
variance~$\sigma^2$.
This choice of~$F$ is taken from a recent paper by Angstmann et 
al.~\cite{AngstmannEtAl2015}; notice that $F_x=-1<0$ so the first 
assumption of Theorem~\ref{thm: stability Fx} is not satisfied and 
we must rely on Theorem~\ref{thm: stability E} to ensure stability
of the spatially discrete scheme~\eqref{eq: spatially discrete}.
For our computations, we used the values $\alpha=0.75$~and
$\sigma=0.5$, with a mesh grading parameter~$\gamma=1/\alpha$. 
Figure~\ref{fig: application} shows a contour plot of the numerical 
solution computed using our fully discrete method in the case $L=9$, 
$T=10$, $N=100$~and $P=2L^2$. Although we do not know an analytical 
solution, Laplace transform techniques~\cite{AngstmannEtAl2015} show 
that in the limiting case when~$L\to\infty$, and 
interpreting~$u(\cdot,t)$ as a probability density function, the 
expected position, or first moment, is
\[
\bar x(t)=\int_{-\infty}^\infty xu(x,t)\,dx
	=\frac{1}{\Gamma(\alpha)}\int_0^tE_\alpha\bigl(-(t-s)\bigr)
	s^{\alpha-1}\sin s\,ds,
\]
where $E_\alpha$ denotes the Mittag--Leffler 
function~\eqref{eq: ML func}.  Figure~\ref{fig: x bar} shows the 
oscillatory behaviour of~$\bar x(t)$ for~$0\le t\le T=70$, and 
the difference between this theoretical value and the first moment of 
the numerical solution~$U^n_h$, in the case $L=20$, 
$N=20T$~and $P=2TL^2$.  We observe little if any loss of accuracy 
over more than 10~oscillations.

\subsection{Non-smooth initial data}
In the special case of a fractional diffusion equation 
($F\equiv0$~and $g\equiv0$), a standard energy argument shows 
that both the exact solution and the finite 
element solution are stable in~$L_2(\Omega)$, with
\[
\|u(t)\|\le\|u_0\|\quad\text{and}\quad
\|u_h(t)\|\le\|u_{0h}\|\quad\text{for $t>0$.}
\]
By comparison, for nonzero~$F$ the stability estimates of 
Theorems \ref{thm: stability Fx}~and \ref{thm: stability E} yield 
weaker bounds of the form
\begin{equation}\label{eq: u_0 stab 1}
\|u_h(t)\|\le C\|u_{0h}\|_1\quad\text{for $0\le t\le T$,}
\end{equation}
and in the case of our (spatially continuous) time-stepping scheme, 
Theorem~\ref{thm: time stepping stability},
\begin{equation}\label{eq: u_0 stab 2}
\|U^n\|\le C\|U^0\|_2\quad\text{for $0\le t\le T$.}
\end{equation}
To investigate whether the stability properties really depend on 
the smoothness of the initial data, we solved the same problem as 
in Section~\ref{sec: application} but chose the nodal values for 
the discrete initial data~$u_{0h}$ to be uniformly distributed 
pseudorandom numbers in the unit interval.  For $0\le t\le T=40$
and many different combinations of $N$~and $P$, we never observed any 
kind of instability.  In all cases, the solution quickly smoothed and 
began an oscillatory behaviour similar to that seen in 
Figure~\ref{fig: application}, suggesting that 
\eqref{eq: u_0 stab 1}~and \eqref{eq: u_0 stab 2} are pessimistic with 
respect to the regularity required of the initial data.
\appendix
\section{Positivity of discrete convolution operators}
\label{sec: positivity}

Recall the following positivity property of Fourier cosine series.

\begin{lemma}\label{lem: Zygmund}
If the sequence $a_0$, $a_1$, $a_2$, \dots tends to zero and satisfies
\[
a_n\ge0\quad\text{and}\quad
a_{n+1}\le\tfrac12(a_n+a_{n+2})\quad\text{for all $n\ge0$,}
\]
then
\[
\frac{a_0}{2}+\sum_{n=1}^\infty a_n\cos n\theta\ge0
        \quad\text{for $-\pi\le\theta\le\pi$.}
\]
\end{lemma}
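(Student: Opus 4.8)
The plan is to recognize the hypothesis $a_{n+1}\le\tfrac12(a_n+a_{n+2})$ as convexity of the sequence, i.e.\ $\Delta^2a_n\ge0$ where $\Delta a_n=a_n-a_{n+1}$ and $\Delta^2a_n=a_n-2a_{n+1}+a_{n+2}$, and then to sum the cosine series by parts twice so that the Fej\'er kernel appears. First I would record two elementary consequences of convexity combined with $a_n\to0$: since $\Delta^2a_n\ge0$, the sequence $\Delta a_n$ is nonincreasing, and if it were ever negative or had a positive limit then $a_n$ could not remain nonnegative while tending to zero; hence $\Delta a_n\ge0$ for all $n$ and $\Delta a_n\to0$.

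Next, I would introduce the Dirichlet kernel $D_n(\theta)=\tfrac12+\sum_{k=1}^n\cos k\theta=\sin\!\bigl((n+\tfrac12)\theta\bigr)/\bigl(2\sin(\theta/2)\bigr)$, so that $\cos n\theta=D_n-D_{n-1}$ and $\tfrac{a_0}2=a_0D_0$. Writing $S_N(\theta)=\tfrac{a_0}2+\sum_{n=1}^N a_n\cos n\theta$ and applying summation by parts once gives
\[
S_N=a_ND_N+\sum_{n=0}^{N-1}\Delta a_n\,D_n.
\]
Then, setting $T_n=\sum_{k=0}^nD_k$ so that $D_n=T_n-T_{n-1}$ (with $T_{-1}=0$), a second summation by parts yields
\[
S_N=a_ND_N+\Delta a_{N-1}\,T_{N-1}+\sum_{n=0}^{N-2}\Delta^2a_n\,T_n.
\]
I would check both identities for small $N$ to guard against index errors.

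The crux is the classical nonnegativity of the Fej\'er kernel: evaluating the telescoping sum of Dirichlet kernels shows
\[
T_n(\theta)=\frac12\left(\frac{\sin\frac{(n+1)\theta}{2}}{\sin\frac{\theta}{2}}\right)^2\ge0.
\]
Consequently the final sum $\sum_{n=0}^{N-2}\Delta^2a_n\,T_n$ and the term $\Delta a_{N-1}T_{N-1}$ are both nonnegative, since $\Delta^2a_n\ge0$ and $\Delta a_{N-1}\ge0$. The only term whose sign is not controlled is the boundary term $a_ND_N$.

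Finally I would let $N\to\infty$. For fixed $\theta$ with $0<|\theta|\le\pi$ both kernels are bounded, $|D_N(\theta)|\le1/(2|\sin(\theta/2)|)$ and $0\le T_N(\theta)\le1/(2\sin^2(\theta/2))$, so the boundary terms vanish: $a_ND_N\to0$ because $a_N\to0$, and $\Delta a_{N-1}T_{N-1}\to0$ because $\Delta a_{N-1}\to0$. Hence $S_N(\theta)\to\sum_{n=0}^\infty\Delta^2a_n\,T_n(\theta)\ge0$, the convergence of the cosine series itself being guaranteed by Dirichlet's test since $a_n\downarrow0$. At $\theta=0$ every $\cos n\theta=1$, so $S_N(0)=\tfrac{a_0}2+\sum_{n=1}^N a_n\ge0$ directly from $a_n\ge0$. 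I expect the main obstacle to be the bookkeeping: carrying out the two Abel summations and their boundary terms exactly, and justifying that both boundary terms disappear in the limit, for which the boundedness of $D_N$ and $T_N$ away from $\theta=0$ together with $a_N,\Delta a_N\to0$ are precisely what is needed.
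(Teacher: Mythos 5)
Your proposal is correct. Note, though, that the paper does not prove this lemma at all: it simply cites Zygmund's book, so your argument is not "the paper's proof" but rather a self-contained reconstruction of the classical one behind that citation. Everything checks out: convexity $\Delta^2 a_n\ge0$ together with $a_n\to0$ does force $\Delta a_n\ge0$ (otherwise $\Delta a_n$ would be bounded away from zero from below for large $n$ and $a_n$ would diverge) and $\Delta a_n\to0$; both summation-by-parts identities
\[
S_N=a_ND_N+\sum_{n=0}^{N-1}\Delta a_n\,D_n
\quad\text{and}\quad
S_N=a_ND_N+\Delta a_{N-1}T_{N-1}+\sum_{n=0}^{N-2}\Delta^2 a_n\,T_n
\]
are exact as stated; $T_n=\tfrac12\bigl(\sin\tfrac{(n+1)\theta}{2}/\sin\tfrac\theta2\bigr)^2\ge0$ is the Fej\'er-kernel identity; and for fixed $\theta\ne0$ the uniform bounds on $D_N$ and $T_N$ plus $a_N\to0$, $\Delta a_{N-1}\to0$ kill the boundary terms, while Dirichlet's test (using $a_n\downarrow0$, which follows from $\Delta a_n\ge0$) gives convergence of $S_N(\theta)$, so the limit is a sum of nonnegative terms. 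The one point worth stating explicitly is $\theta=0$: there the series need not converge (it can diverge to $+\infty$), and your observation that every partial sum $S_N(0)\ge0$ is exactly the right way to interpret and dispose of that case. Compared with the paper, your route buys a complete elementary proof at the cost of the Abel-summation bookkeeping, which you have carried out correctly.
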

\begin{proof}
Zygmund~\cite[p.~93 and Theorem~(1.5), p.~183]{Zygmund}.
\end{proof}

For $0<\alpha<1$, let
\[
(AU)^n=\sum_{j=0}^n a_{n-j}U^j
        \quad\text{where $a_n=(n+1)^\alpha-n^\alpha$.}
\]
We used the following inequality in the proof of
Lemma~\ref{lem: check partial}.

\begin{lemma}\label{lem: a_n positive}
For any real, square-summable sequence $U^0$, $U^1$, $U^2$, \dots,
\[ 
\sum_{n=0}^\infty (AU)^nU^n\ge\frac{1}{2}\sum_{n=0}^\infty(U^n)^2.
\]
\end{lemma}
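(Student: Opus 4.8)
The plan is to reduce the stated inequality to the nonnegativity of a symmetric Toeplitz quadratic form, and then to establish that nonnegativity by recognizing its symbol as twice the Fourier cosine series appearing in Lemma~\ref{lem: Zygmund}.

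First I would symmetrize. Writing the left-hand side as $S=\sum_{n\ge j\ge0}a_{n-j}U^nU^j$, splitting off the diagonal, and swapping the summation indices $n$ and $j$ in the off-diagonal part gives
\[
2S=\sum_{n,j\ge0}a_{|n-j|}U^nU^j+a_0\sum_{n\ge0}(U^n)^2.
\]
Since $a_0=1^\alpha-0^\alpha=1$, the desired inequality $S\ge\tfrac12\sum_n(U^n)^2$ is therefore exactly equivalent to the assertion that the symmetric form
\[
T=\sum_{n,j\ge0}a_{|n-j|}U^nU^j
\]
is nonnegative.

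Next I would represent $T$ spectrally. For a real, finitely supported sequence $(U^n)$, let $\hat U(\theta)=\sum_{n\ge0}U^ne^{in\theta}$, a trigonometric polynomial, so that $|\hat U(\theta)|^2=\sum_{n,j}U^nU^je^{i(n-j)\theta}$. Introducing the symbol $g(\theta)=a_0+2\sum_{m=1}^\infty a_m\cos m\theta=2f(\theta)$, where $f$ is the cosine series in Lemma~\ref{lem: Zygmund}, one has $\frac{1}{2\pi}\int_{-\pi}^\pi g(\theta)e^{ik\theta}\,d\theta=a_{|k|}$, whence
\[
T=\frac{1}{2\pi}\int_{-\pi}^\pi g(\theta)\,|\hat U(\theta)|^2\,d\theta,
\]
and $T\ge0$ will follow once $g\ge0$ almost everywhere. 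Here I would flag the one genuine analytic subtlety: because $a_m\sim\alpha m^{\alpha-1}$ is not summable, $g$ is unbounded, but its singularity at $\theta=0$ is of order $|\theta|^{-\alpha}$, which is integrable precisely because $\alpha<1$; this is what validates the Parseval identity above against the bounded factor $|\hat U|^2$.

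Finally I would invoke Lemma~\ref{lem: Zygmund} to conclude $f\ge0$, for which its three hypotheses must be verified for $a_n=(n+1)^\alpha-n^\alpha$. Nonnegativity and the decay $a_n\to0$ are immediate from $0<\alpha<1$. The convexity condition $a_{n+1}\le\tfrac12(a_n+a_{n+2})$ is the crux, and I expect it to be the main obstacle; I would obtain it from convexity of $\phi(x)=(x+1)^\alpha-x^\alpha$, whose second derivative is $\alpha(\alpha-1)\bigl[(x+1)^{\alpha-2}-x^{\alpha-2}\bigr]$, a product of two negative factors and hence positive for $0<\alpha<1$. With $f\ge0$ in hand, $g=2f\ge0$, so $T\ge0$ and the inequality holds for finitely supported sequences; the general square-summable case then follows by truncation and passage to the limit.
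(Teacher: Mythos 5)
Your proof is correct and follows essentially the same route as the paper: a Fourier/Parseval representation of the quadratic form combined with Lemma~\ref{lem: Zygmund}, whose hypotheses you verify exactly as the paper does via positivity, decay, and convexity of $f(x)=(x+1)^\alpha-x^\alpha$. The only difference is cosmetic: you symmetrize to a Toeplitz form with symbol $2f(\theta)$ and show it is nonnegative, whereas the paper works with the one-sided series $\atilde(\theta)=\sum_{n\ge0}a_ne^{in\theta}$ and uses $\Re\atilde(\theta)\ge a_0/2=1/2$, which amounts to the same inequality (and your remark on the integrable $|\theta|^{-\alpha}$ singularity of the symbol is a welcome extra justification of the Parseval step that the paper leaves implicit).
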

\emph{Proof}.
Define $\Vtilde(\theta)=\sum_{n=0}^\infty V^ne^{in\theta}$, and 
observe that
\[
\int_{-\pi}^\pi\Utilde(\theta)\overline{\Vtilde(\theta)}\,d\theta
        =\sum_{n=0}^\infty\sum_{j=0}^\infty U^n\overline{V^j}
        \int_{-\pi}^\pi e^{i(n-j)\theta}\,d\theta
        =2\pi\sum_{n=0}^\infty U^n\overline{V^n}.
\]
Since
\[
\widetilde{AU}(\theta)
        =\sum_{n=0}^\infty\biggl(\sum_{j=0}^n a_{n-j}U^j\biggr)
                e^{in\theta}
        =\sum_{j=0}^\infty\biggl(\sum_{n=j}^\infty 
                a_{n-j}e^{i(n-j)\theta}\biggr)U^je^{ij\theta}
        =\atilde(\theta)\Utilde(\theta)
\]
we conclude
\[
\sum_{n=0}^\infty (AU)^n\overline{V^n}=\frac{1}{2\pi}\int_{-\pi}^\pi
        \atilde(\theta)\Utilde(\theta)
                \overline{\Vtilde(\theta)}\,d\theta.
\]
In particular, when $V^n=U^n$ is purely real,
\[
\sum_{n=0}^\infty (AU)^nU^n=\frac{1}{2\pi}\int_{-\pi}^\pi
        \Re\atilde(\theta)|\Utilde(\theta)|^2\,d\theta.
\]
The function $f(x)=(x+1)^\alpha-x^\alpha$ is positive for~$x\ge0$,
and as~$x\to\infty$,
\[
f(x)=x^\alpha[(1+x^{-1})^\alpha-1]=x^\alpha[\alpha x^{-1}+O(x^{-2})]
        =\alpha x^{\alpha-1}+O(x^{\alpha-2}),
\]
so in particular $f(x)\to0$.
Furthermore, $f$ is convex because
\[
f''(x)=\alpha(\alpha-1)\bigl[(x+1)^{\alpha-2}-x^{\alpha-2}\bigr]
        =\alpha(1-\alpha)\bigl[x^{\alpha-2}-(x+1)^{\alpha-2}\bigr]\ge0
\]
for all $x>0$, so the sequence $a_n=f(n)$ satisfies the assumptions
of Lemma~\ref{lem: Zygmund}.  Hence,
$\Re\atilde(\theta)\ge a_0/2=1/2$ and finally
\[
\sum_{n=0}^\infty (AU)^nU^n\ge\frac{1}{2\pi}\int_{-\pi}^\pi
        \frac{1}{2}|\Utilde(\theta)|^2\,d\theta
        =\frac{1}{2}\sum_{n=0}^\infty(U^n)^2.\qquad\endproof
\]

\bibliographystyle{siam}
\bibliography{Le_McLean_Mustapha_refs}
\end{document}